\renewcommand{\epsilon}{\varepsilon}
\renewcommand{\phi}{\varphi}
\newcommand{\M}[1]{\mathcal{M}_{#1}(\mathbb{C})}
\newcommand{\Mreal}[1]{\mathcal{M}_{#1}(\mathbb{R})}
\newcommand{\U}[1]{\mathcal{U}(#1)}
\DeclareMathOperator{\diag}{diag}
\DeclareMathOperator{\rk}{rk}
\newtheorem{theorem}{Theorem}[section]
\newtheorem{definition}[theorem]{Definition}
\newtheorem*{definition*}{Definition}
\newtheorem{proposition}[theorem]{Proposition}
\newtheorem{corollary}[theorem]{Corollary}
\newtheorem{lemma}[theorem]{Lemma}
\newtheorem{remark}[theorem]{Remark}
\newtheorem*{conjecture*}{Conjecture}
\newcommand\vertarrowbox[3][6ex]{
    \begin{array}[t]{@{}c@{}} #2 \\
        \left\uparrow\vcenter{\hrule height #1}\right.\kern-\nulldelimiterspace\\
        \makebox[0pt]{\scriptsize#3}
    \end{array}
}
\theoremstyle{definition}
\newtheorem{example}[theorem]{Example}
\author{Ion Nechita}
\email{ion.nechita@univ-tlse3.fr}
\address{Laboratoire de Physique Th\'eorique, Universit\'e de Toulouse, CNRS, UPS, France}
\author{Zikun Ouyang}
\email{zikun.ouyang@gmail.com}
\address{Institut de Math\'ematiques, Universit\'e de Toulouse, CNRS, UPS, France}
\author{Anna Szczepanek}
\email{anna.szczepanek@math.univ-toulouse.fr}
\address{Institut de Math\'ematiques, Universit\'e de Toulouse, CNRS, UPS, France}
\title{Generalized unistochastic matrices}
\begin{document}
    \begin{abstract}
        We study a class of bistochastic matrices generalizing unistochastic matrices. Given a complex bipartite unitary operator, we construct a bistochastic matrix having as entries the normalized squared Frobenius norm of the blocks. We show that the closure of the set of generalized unistochastic matrices is the whole Birkhoff polytope. We characterize the points on the edges of the Birkhoff polytope that belong to a given level of our family of sets, proving that the different (non-convex) levels have a rich inclusion structure. We also study the corresponding generalization of orthostochastic matrices. Finally, we introduce and study the natural probability measures induced on our sets by the Haar measure of the unitary group. These probability measures interpolate between the natural measure on the set of unistochastic matrices and the Dirac measure supported on the van der Waerden matrix.
    \end{abstract}

    \maketitle

    \tableofcontents

    \section{Introduction}

    Bistochastic and unistochastic matrices is a classical topic that comes up repeatedly in various domains of mathematics and mathematical physics. To set the stage and introduce notation, let us recall that a square real matrix of size $d$, the set of which we shall denote by $\Mreal{d}$, is called \emph{bistochastic} (or \emph{doubly stochastic}) if it has non-negative entries that add up to one in every row and column. That is,
    $ B = (B_{ij})_{i,j=1}^d$ is bistochastic when its entries satisfy the following conditions:
    $$\forall {i,j}\quad B_{ij}\geq 0\qquad \forall {i}\quad \sum_{j = 1}^dB_{ij}=1 \quad \text{ and }\quad \forall {j }\quad \sum_{i = 1}^dB_{ij}=1.
    $$
    We shall denote by $\mathsf B_d$ the set of all bistochastic matrices of order $d$.

    A bistochastic matrix is called \emph{unistochastic} when its entries are the squared absolute values of some unitary matrix of the same size. More formally,
    we consider the map
    $$
    \begin{aligned}
        \phi_d\colon  \U{d} \ni \left(U_{ij}\right)_{i,j=1}^d \longmapsto \left(|U_{ij}|^2\right)_{i,j=1}^d  \in \Mreal{d}
    \end{aligned}.
    $$
    The image of $\U{d}$ under $\phi_d$ constitutes the set $\mathsf U_d$ of unistochastic matrices of order $d$. Alternatively, using the Hadamard (entrywise) product $\circ$ of matrices, we can write
    $$\mathsf U_d:= \phi_d( \U{d}) = \{ U\circ\bar U \:|\:U\in \U{d} \}.$$
    By unitarity, we have $\mathsf U_d\subseteq \mathsf B_d$.

    One of the reasons behind the prominence of bistochastic matrices is the fact that its entries can be regarded as the  probabilities that some (classical) physical system evolves from one state to another. If the bistochastic  matrix is also unistochastic, then the system under consideration can be quantized. There are many references related to the applications of unistochastic matrices in various areas, e.g., in quantum information theory and in particle physics, see \cite{bengtsson2004import} and references therein.

    \smallskip

    It is well known that the \emph{Birkhoff polytope} $\mathsf B_d$ is convex and compact. The extreme points of $\mathsf B_d$ are permutation matrices, so the Birkhoff polytope  has $d!$ vertices. A bistochastic matrix lies at the boundary of the Birkhoff polytope iff it has a zero entry. There are $d^2$  faces and they correspond to the inequalities that the matrix entries must satisfy. For instance,
    consider $d=3$ and
    $$B=\begin{bmatrix}
        a & b & *\\
        c & d & *\\
        * & * & *
    \end{bmatrix}$$
    with $a,b,c,d \in \mathbb R$. The $9$ inequalities corresponding to the faces of the Birkhoff polytope are
    $$
    \begin{cases}
        a,b,c,d\geq 0   \\
        a+b\leq 1,\quad c+d\leq 1,\quad a+c\leq 1,\quad b+d\leq 1  \\
        (1-a-c)+(1-b-d)\leq 1
    \end{cases}
    $$

    An obvious example of a unistochastic matrix is a permutation matrix. Another well-known example is the van der Waerden (flat)	matrix, i.e., the matrix whose entries are all equal to $1/d$. It is worth noting that the unitary matrices that induce the van der Waerden
    matrix are precisely the renowned complex Hadamard matrices, for instance the Fourier matrix $\frac{1}{\sqrt d}(\exp(\frac{2\pi\mathrm{i}}{d}jk))_{j,k=0}^{d-1}$.

    One immediately sees that for $d=2$ every bistochastic matrix is unistochastic, i.e., we have $\mathsf U_2 = 	\mathsf B_2$. This, however, is a sole exception as for every dimension $d$ higher than two  we have $\mathsf U_d \subsetneq   \mathsf B_d$ and $\mathsf U_d$ is known to be non-convex. A lot of effort has been put into characterizing unistochastic matrices and one of the key tools turned out to be the \emph{bracelet condition}. It allows us to distinguish the set of \emph{bracelet matrices}, which is a superset of unistochastic matrices.
    Namely, let $\alpha = (\alpha_1, \ldots, \alpha_d)$ and $\beta = (\beta_1, \ldots, \beta_d)$ be probability vectors (i.e., in each vector the entries are non-negative and sum up to one). We say that $(\alpha,\beta)$ satisfies the bracelet condition if
    \begin{equation}\label{eq:intro-Bd}
        2\max_{j=1, \ldots, d}\sqrt{\alpha_{j}\beta_{j}}\leq\sum_{j=1}^d\sqrt{\alpha_{j}\beta_{j}}.
    \end{equation}
    Now, a bistochastic matrix is said to be a \emph{bracelet matrix} if every pair of its rows and every pair of its columns, regarded as pairs of probability vectors, satisfies the bracelet condition; we shall denote the set of bracelet matrices of order $d$ by $\mathsf L_d$.
    The bracelet condition plays an instrumental role in the study of unistochastic matrices because it characterizes the first non-trivial case $d=3$ \cite{au1979orthostochastic}, i.e.,
    $$\mathsf U_3 = \mathsf L_3 \subsetneq \mathsf B_3$$
    and, as we already mentioned, it provides a necessary (but not sufficient) condition for unistochasticity in higher dimensions (see \cite{rajchel2022algebraic}):
    $$\forall d \geq 4, \qquad \mathsf U_d \subsetneq \mathsf L_d \subsetneq \mathsf B_d.$$

    In the  present paper we introduce the notion of \emph{generalized unistochastic matrices}, denoted by $\mathsf U_{d,s}$;
    here, $s$ is an integer parameter.
    The idea is to replace $\U{d}$ with $\U{ds}$ and regard a unitary matrix from $\U{ds}$ as a $d \times d$ matrix consisting of $s \times s$ submatrices (blocks). Then it suffices to replace the absolute values of entries by the  normalized squares of Frobenius (or Schatten-2) norms of blocks to arrive at a $d \times d$ bistochastic matrix again. Formally, we consider the map
    $$
    \phi_{d,s}\colon  \U{d s} \ni  \big( U_{ij}(k,l)\big)_{\substack{ 1\leq i,j\leq d \\ 1\leq k,l \leq s}} \  \longmapsto   \big(\tfrac{1}{s}||U_{ij}||_F^2\big)_{1\leq i,j\leq d} \in \mathsf B_d,
    $$
    where   $U_{ij}(k,l)$ is the $(k,l)$-th entry in the $(i,j)$-th block, and we define $\mathsf U_{d,s}$ as the image of $\phi_{d,s}$. Let us recall  that the Frobenius norm of a square complex matrix $X \in \M{n}$ is given by
    $$\|X\|_F := \Big( \sum_{i,j=1}^n |X_{ij}|^2 \Big)^{1/2} = \Tr(XX^*)^{1/2}.$$
    In Propositions \ref{prop:Uds_in_Bd} \& \ref{prop:Ud_in_Uds} we show that generalized unistochastic matrices do indeed generalize  the notion of unistochastic matrices, i.e., for every $s$ we have
    $$\mathsf U_d \subseteq \mathsf U_{d,s} \subseteq \mathsf B_d.$$

    One of the main results of the present paper is \cref{thm:union-is-all-bistochastic}, where we show that every bistochastic matrix can be arbitrarily well approximated by a generalized unistochastic matrix of some order. The key ingredient in  proving this result is the convexity-type property of generalized unistochastic matrices:
    $$\forall {s,t} \quad \tfrac{s}{s+t}\mathsf U_{d,s}+\tfrac{t}{s+t}\mathsf U_{d,t}\subseteq \mathsf U_{d,s+t},$$
    see \cref{prop:s-convexity-Uds}. Then in \cref{Corollary} we  investigate further non-trivial inclusion relations between the sets of generalized unistochastic matrices of different orders.

    Let us point our that an alternative generalization of unistochastic matrices was proposed by Gutkin in \cite{GUTKIN201328}. Unfortunately, as we show at the end of \cref{sec:generalized-unistochastic-matrices},
    the proposed generalization yields only stochastic (and generally not bistochastic) matrices, so it is quite far from the usual unistochastic matrices.

    Generalized unistochastic matrices were also considered in \cite{shahbeigi2021log}, as classical channels associated to generalized unistochastic channels. More precisely, given a bipartite unitary matrix $U \in \mathcal U(ds)$, Shahbeigi, Amaro-Alcal{\'a}, Pucha{\l}a, and {\.Z}yczkowski consider the quantum channel $\Phi : \M{d} \to \M{d}$ given by
    $$\Phi(X) = \Tr_s \left[ U \left(X \otimes \frac{I_s}{s}\right) U^* \right].$$
    The classical transition matrix corresponding to this channel, $B_{ij} := \langle i | \Phi(\ketbra{j}{j}) | i \rangle$ corresponds precisely to the generalized bistochastic matrices we study. In this work, we further the understanding of these objects, providing new insights on their structure and relation to uni- and bi-stochastic matrices. We shall refer to \cite{shahbeigi2021log} at different points of this paper, emphasizing the new contributions of our research. Our focus will be on generalized unistochastic matrices, and not on the unistochastic channels, as in \cite{shahbeigi2021log}.

    \smallskip

    The main tool we develop to investigate $\mathsf U_{d,s}$ is the \emph{generalized bracelet condition}. A pair of probability vectors $\alpha,\beta$ is said to satisfy the generalized bracelet condition of order $s$ if they correspond to the normalized squares of Frobenius norms of the blocks of some unitary matrix $U \in \mathcal {U}(d s)$, i.e.,
    if there exist  matrices $A_1, \ldots, A_d, B_1, \ldots, B_d \in \M{s}$ satisfying
    $$ \forall i  \quad \tfrac{1}{s} \|A_i\|^2_F =\alpha_i\quad \text{ and } \quad \tfrac{1}{s} \|B_i\|^2_F=\beta_i$$
    as well as
    $$\sum_{i=1}^d A_iA_i^*=\sum_{i=1}^d B_iB_i^*=I_s, \quad \sum_{i=1}^d A_iB_i^*=0,$$
    which means that the $2$-row block matrix  $$
    \begin{bmatrix}
        A_1 & \cdots & A_d \\
        B_1 & \cdots & B_d
    \end{bmatrix}$$
    (of  size  $2s \times ds$) can be expanded to a unitary matrix of size $ds \times ds$.  See Proposition \ref{prop:Bd1_is_Bd} for the proof  that these conditions do indeed generalize the standard bracelet condition  \eqref{eq:intro-Bd}.

    \smallskip

    Since the generalized unistochastic matrices $\mathsf U_{d,s}$ are defined as the image of $\mathcal{U}(ds)$ under $\phi_{d,s}$, it is natural to equip $\mathsf U_{d,s}$ with the probability measure obtained by pushing forward the Haar measure from $\mathcal{U}(ds)$ via  $\phi_{d,s}$. We compute the first few joint moments of the elements of a random matrix $B \in \mathsf U_{d,s}$. In particular, the expected value of $B_{ij}$ equals $\ 1/d$, while its variance decreases as $s$ grows (and $d$ is fixed), which means that the probability distribution on $\mathsf U_{d,s}$ tends to concentrate around the van der Waerden matrix. We also draw some conclusions regarding the covariance and correlation of the elements of $B$.

    \bigskip

    The paper is organized as follows. In \cref{sec:generalized-unistochastic-matrices} we introduce generalized unistochastic matrices and present their basic properties. \cref{sec:bracelet} contains a suitable generalization of the bracelet conditions for unistochastic matrices; these conditions allow us   to showcase the complexity of the different levels of the generalized unistochastic sets. In \cref{sec:orthostochastic} we discuss the corresponding generalizations of orthostochastic matrices. Finally, in \cref{sec:random} we explore the properties of the probability measures induced on the set of generalized unistochastic matrices by the Haar distribution on the unitary group.

    \section{Generalized unistochastic matrices} \label{sec:generalized-unistochastic-matrices}

    The main idea of this work can be summarized in the following table:

    \medskip

    \begin{center}
        \begin{tcolorbox}[enhanced,width=6.3in,center upper,drop fuzzy shadow southwest,
            colframe=red!50!black,colback=yellow!5]
            \bgroup
            \def\arraystretch{2}
            \begin{tabular}{@{}c|c|c@{}}
                \textit{Source} & \textit{Operation} & \textit{Result} \\ \hline\hline
                \begin{tabular}[c]{@{}c@{}}Unitary group\\[-0.5em] $U \in \mathcal U(d)$\end{tabular} & $\mathbb C \ni u_{ij} \mapsto |u_{ij}|^2$ & \begin{tabular}[c]{@{}c@{}}Unistochastic matrix\\[-0.5em] $B \in \mathsf U_d$\end{tabular} \\ \hline
                \begin{tabular}[c]{@{}c@{}}Larger unitary group\\[-0.5em] $U \in \mathcal U(ds) \subseteq \mathcal M_d(\mathcal M_s(\mathbb C))$\end{tabular} & $
                \mathcal M_s(\mathbb C)\ni U_{ij} \mapsto \frac{1}{s}||U_{ij}||_F^2$ & \begin{tabular}[c]{@{}c@{}}Generalized unistochastic matrix\\[-0.5em] $B \in \mathsf U_{d,s}$\end{tabular}
            \end{tabular}
            \egroup
        \end{tcolorbox}
    \end{center}

    \medskip

    Let $d \geq 2$ and $s \geq 1$ be integers. In what follows we regard $B \in \M{d s}$ as a $d \times d$ block matrix consisting of $s \times s$ blocks. We shall write $B_{ij}$ for the the $(i,j)$-th block and $B_{ij}(k,l)$ for the \mbox{$(k,l)$-th} coefficient inside this block, where $k,l \in [s]$ and $i,j \in [d]$. For brevity, we put $[n]:=\{1,2, \ldots, n\}$ and $\mathfrak S_n$  for the group of permutations of $[n]$. We come now to the main definition of this work, that of generalized unistochastic matrices. These objects have previously been considered in \cite{shahbeigi2021log}, in relation to classical actions of quantum channels.

    \begin{definition}\label{def:generalized-unistochastic}
        Consider the map
        $$
        \begin{aligned}
            \label{eq:def-phi-d-s}
            \phi_{d,s}\colon &\U{d s}&\xlongrightarrow{}&\  \Mreal{d}\\
            &\left( U_{ij}(k,l)\right)_{i,j \in [d];  k,l \in [s]}     &\longmapsto     &\:\big(\tfrac{1}{s}||U_{ij}||_F^2\big)_{ i,j\in [d]}
        \end{aligned}
        $$
        We define $\mathsf U_{d,s}:=\phi_{d,s}(\U{d s})$ to be the set of \emph{generalized unistochastic matrices}. A matrix $B$ in the range of $\phi_{d,s}$ will be called $s$-unistochastic \cite{shahbeigi2021log}.
    \end{definition}

    \begin{example}
        For $d=s=2$, $P=\begin{bNiceMatrix}[c,margin]
            \CodeBefore
            \rectanglecolor{blue!10}{1-1}{2-2}
            \rectanglecolor{red!10}{3-1}{4-2}
            \rectanglecolor{red!10}{1-3}{2-4}
            \rectanglecolor{blue!10}{3-3}{4-4}
            \Body
            0 & 0 & 0 & 1\\
            0 & 0 & 1 & 0\\
            0 & 1 & 0 & 0\\
            1 & 0 & 0 & 0
        \end{bNiceMatrix}  \stackrel{\phi_{2,2}}{ \longmapsto }
        \begin{bNiceMatrix}[c,margin]
            \CodeBefore
            \chessboardcolors{blue!10}{red!10}
            \Body
            0 & 1\\
            1 & 0
        \end{bNiceMatrix} \in \mathsf B_2.
        $

    \end{example}

    Importantly, there exist generalized unistochastic matrices which are not unistochastic. This makes the definition above interesting and justifies the study of generalized unistochastic matrices.

    \begin{example}\label{ex:generalized-but-not-unistochastic}
        For $d=3$, $ P=\begin{bNiceMatrix}[c,margin]
            \CodeBefore
            \rectanglecolor{blue!10}{1-1}{2-2}
            \rectanglecolor{red!10}{1-3}{2-4}
            \rectanglecolor{blue!10}{1-5}{2-6}
            \rectanglecolor{red!10}{3-1}{4-2}
            \rectanglecolor{blue!10}{3-3}{4-4}
            \rectanglecolor{red!10}{3-5}{4-6}
            \rectanglecolor{blue!10}{5-1}{6-2}
            \rectanglecolor{red!10}{5-3}{6-4}
            \rectanglecolor{blue!10}{5-5}{6-6}
            \Body
            0 & 0 & 0 & 0 & 1 & 0\\
            0 & 0 & 0 & 1 & 0 & 0\\
            1 & 0 & 0 & 0 & 0 & 0\\
            0 & 0 & 0 & 0 & 0 & 1\\
            0 & 1 & 0 & 0 & 0 & 0\\
            0 & 0 & 1 & 0 & 0 & 0
        \end{bNiceMatrix}  \stackrel{\phi_{3,2}}{ \longmapsto } B=
        \begin{bNiceMatrix}[c,margin]
            \CodeBefore
            \chessboardcolors{blue!10}{red!10}
            \Body
            0 & \frac{1}{2} & \frac{1}{2}\\[0.33em]
            \frac{1}{2} & 0 &\frac{1}{2}\\[0.33em]
            \frac{1}{2} & \frac{1}{2} & 0
        \end{bNiceMatrix} \in \mathsf U_{3,2} \setminus \mathsf U_{3}$.

        \medskip

        \noindent Indeed, $P$ is a permutation matrix corresponding to the permutation $(1\, 3\, 6\, 4\, 2\, 5) \in \mathfrak S_6$, hence $B$ is 2-unistochastic, i.e.,  $B \in \mathsf U_{3,2}$. However, $B$ is not unistochastic, i.e., $B \notin \mathsf U_{3}$,  since it does not satisfy the bracelet conditions, see  \cref{sec:bracelet}.
    \end{example}

    In the next two propositions, we show that generalized unistochastic matrices are bistochastic and that they contain, for every value of the parameter $s$, the set of (usual) unistochastic matrices $\mathsf U_d$, which coincides with the generalized family at $s=1$. The special case $s=d$ of the latter result, relevant in the study of some class of quantum channels, has been considered in \cite[Proposition 21]{shahbeigi2021log}.

    \begin{proposition}\label{prop:Uds_in_Bd} For every    $d \geq 2$ and $s \geq 1$, we have
        $\mathsf U_{d,s}\subseteq \mathsf B_d$.
    \end{proposition}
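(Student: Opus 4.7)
The plan is to unfold the definition of $\phi_{d,s}$ and check the three defining conditions of bistochasticity directly, using only unitarity of $U$ and the cyclic/linearity properties of the trace.

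First I would fix $U\in\mathcal U(ds)$ and write $B=\phi_{d,s}(U)$, so that $B_{ij}=\tfrac{1}{s}\|U_{ij}\|_F^2=\tfrac{1}{s}\operatorname{Tr}(U_{ij}U_{ij}^*)$. Non-negativity of the entries $B_{ij}$ is immediate because $U_{ij}U_{ij}^*$ is a positive semidefinite matrix, so its trace (equivalently, the sum of squared moduli of the entries of $U_{ij}$) is non-negative.

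Next I would handle the row sums. The relation $UU^*=I_{ds}$ read blockwise says that the $(i,i)$-th block of $UU^*$ equals $I_s$, i.e.\ $\sum_{j=1}^d U_{ij}U_{ij}^*=I_s$. Taking the trace of both sides gives $\sum_{j=1}^d \|U_{ij}\|_F^2=\operatorname{Tr}(I_s)=s$, which upon dividing by $s$ yields $\sum_{j=1}^d B_{ij}=1$. The column sums are handled symmetrically, using $U^*U=I_{ds}$ to obtain $\sum_{i=1}^d U_{ij}^*U_{ij}=I_s$ and then tracing; since $\|U_{ij}\|_F^2=\operatorname{Tr}(U_{ij}^*U_{ij})=\operatorname{Tr}(U_{ij}U_{ij}^*)$, this gives $\sum_{i=1}^d B_{ij}=1$.

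There is essentially no obstacle here: the proof is a direct consequence of the block form of the unitarity relations together with the trace identity $\|X\|_F^2=\operatorname{Tr}(XX^*)=\operatorname{Tr}(X^*X)$. The only thing to be careful about is the normalization factor $1/s$, which is precisely what is needed to turn $\operatorname{Tr}(I_s)=s$ into the bistochastic normalization $1$. This also foreshadows why the same computation with $s=1$ recovers the classical fact $\mathsf U_d\subseteq\mathsf B_d$.
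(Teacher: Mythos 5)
Your proof is correct and takes essentially the same approach as the paper: both deduce the row and column sums of $B$ from the unitarity of $U$, the paper by summing squared entries along full rows and columns, you by reading $UU^*=U^*U=I_{ds}$ blockwise and taking traces, which is the same computation in block form.
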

    \begin{proof}
        Let $d \geq 2$ and $s \geq 1$.
        By unitarity,
        $$\sum_{i=1}^d\sum_{k=1}^s |U_{ij}(k,l)|^2=\sum_{j=1}^d\sum_{l=1}^s |U_{ij}(k,l)|^2 = 1.
        $$
        Therefore, for all $i \in [d]$ we have
        $$ \sum_{j=1}^d B_{ij} = \tfrac{1}{s}\sum_{j=1}^d \|U_{ij}\|_F^2  =\tfrac{1}{s} \sum_{k=1}^s\sum_{j=1}^d\sum_{l=1}^s|U_{ij}(k,l)|^2=1$$
        and, analogously, $ \sum_{i=1}^d B_{ij} = 1$, which concludes the proof.
    \end{proof}

    \begin{proposition}\label{prop:Ud_in_Uds}
        For every $d \geq 2$ and $s \geq 1$, we have   $\mathsf U_d=\mathsf U_{d,1}\subseteq \mathsf U_{d,s}$.
    \end{proposition}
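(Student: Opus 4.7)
The proof divides naturally into an equality and an inclusion, and neither part presents any real obstacle; the whole content lies in writing down the correct embedding.

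For the equality $\mathsf U_d = \mathsf U_{d,1}$, I would simply unwind the definitions. When $s = 1$, each ``block'' $U_{ij}$ of a matrix $U \in \mathcal U(d\cdot 1) = \mathcal U(d)$ is a single complex scalar, and $\tfrac{1}{1}\|U_{ij}\|_F^2 = |U_{ij}|^2$. Hence $\phi_{d,1}$ coincides with $\phi_d$ on the nose, and taking images gives $\mathsf U_{d,1} = \phi_{d,1}(\mathcal U(d)) = \phi_d(\mathcal U(d)) = \mathsf U_d$.

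For the inclusion $\mathsf U_d \subseteq \mathsf U_{d,s}$, the natural idea is to use the tensor product with the identity to lift a unitary from $\mathcal U(d)$ to $\mathcal U(ds)$. Given $B \in \mathsf U_d$, pick $U \in \mathcal U(d)$ with $\phi_d(U) = B$, and set $V := U \otimes I_s \in \mathcal U(ds)$. Viewed as a $d \times d$ array of $s \times s$ blocks according to the convention of \cref{def:generalized-unistochastic}, the $(i,j)$-th block of $V$ is exactly $V_{ij} = U_{ij}\, I_s$. Therefore
$$\tfrac{1}{s}\|V_{ij}\|_F^2 = \tfrac{1}{s}\,|U_{ij}|^2 \,\|I_s\|_F^2 = \tfrac{1}{s}\,|U_{ij}|^2 \cdot s = |U_{ij}|^2 = B_{ij},$$
so $\phi_{d,s}(V) = B$ and hence $B \in \mathsf U_{d,s}$.

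There is no genuine obstacle here; the only thing worth checking carefully is the block-indexing convention, namely that the Kronecker product $U \otimes I_s$ indeed produces the scalar multiples $U_{ij} I_s$ as its $(i,j)$-blocks (rather than, say, a direct sum-like structure that would arise from $I_s \otimes U$). Once this is fixed, the computation above is a one-line verification.
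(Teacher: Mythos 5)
Your proposal is correct and follows exactly the paper's argument: the equality at $s=1$ is the same definitional unwinding, and the inclusion uses the identical embedding $V = U \otimes I_s$ with the computation $\tfrac{1}{s}\|V_{ij}\|_F^2 = |U_{ij}|^2 = B_{ij}$. Nothing further is needed.
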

    \begin{proof}Let $d \geq 2$ and $s \geq 1$, and
        let $B \in \mathsf U_d$, i.e., there exists   $U\in \U{d}$ such that $B = \phi_d(U)  $. Consider $V:=U\otimes I_s\in \U{d s}$. Then for all $i, j \in [d]$ we have   $V_{ij} = U_{ij} \otimes I_s$, which implies that   $$\tfrac{1}{s}\|V_{ij}\|_F^2=|U_{ij}|^2 = B_{ij};$$ hence, $B \in \mathsf U_{d,s}$, as desired.
    \end{proof}

    Next, we show that the sets $\mathsf U_{d,s}$ satisfy a kind of convexity property. This result will be key in showing one of our main results, Theorem \ref{thm:union-is-all-bistochastic}.

    \begin{proposition}\label{prop:s-convexity-Uds} For every $d \geq 2$  and $s,t \geq 1$, we have
        $\frac{s}{s+t}\mathsf U_{d,s}+\frac{t}{s+t}\mathsf U_{d,t}\subseteq \mathsf U_{d,s+t}$.
    \end{proposition}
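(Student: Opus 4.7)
The plan is to take $B\in\mathsf U_{d,s}$ and $C\in\mathsf U_{d,t}$ with realizing unitaries $U\in\U{ds}$ and $V\in\U{dt}$ (so $B_{ij}=\frac1s\|U_{ij}\|_F^2$ and $C_{ij}=\frac1t\|V_{ij}\|_F^2$), and then construct a single unitary $W\in\U{d(s+t)}$ whose $(i,j)$-block $W_{ij}\in\M{s+t}$ satisfies
$$\|W_{ij}\|_F^2 \;=\; \|U_{ij}\|_F^2 + \|V_{ij}\|_F^2.$$
Once this is achieved, dividing by $s+t$ gives precisely $\frac{s}{s+t}B_{ij}+\frac{t}{s+t}C_{ij}$, which is the desired convex combination.

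The natural construction is to set each $W_{ij}$ to be the block-diagonal matrix
$$W_{ij} \;=\; \begin{bmatrix} U_{ij} & 0 \\ 0 & V_{ij} \end{bmatrix} \;\in\; \M{s+t},$$
for which the Frobenius identity above is obvious. What must be checked is that this recipe really produces a unitary $W\in\U{d(s+t)}$. I would verify this by identifying
$$\mathbb C^d\otimes\mathbb C^{s+t} \;\cong\; \bigl(\mathbb C^d\otimes\mathbb C^s\bigr)\oplus\bigl(\mathbb C^d\otimes\mathbb C^t\bigr),$$
with the basis of $\mathbb C^{s+t}$ taken to be the concatenation of the bases of $\mathbb C^s$ and $\mathbb C^t$. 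Under this identification, the operator $U\oplus V$ acting on the right-hand side is clearly unitary, and its matrix in the $d\times d$ block form on the left-hand side is exactly $W$ with the blocks described above. This is the step where a small bit of care is needed: the direct sum $U\oplus V$ is naturally block-diagonal only after the reordering $((i,k)\text{ in }\mathbb C^d\otimes\mathbb C^s)$ and $((i,l)\text{ in }\mathbb C^d\otimes\mathbb C^t)$ are interleaved so that both $\mathbb C^s$-coordinates and $\mathbb C^t$-coordinates sit together inside the $i$-th slot of $\mathbb C^d$.

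Once $W$ is identified as a legitimate element of $\U{d(s+t)}$, the computation
$$\tfrac{1}{s+t}\|W_{ij}\|_F^2 \;=\; \tfrac{1}{s+t}\bigl(\|U_{ij}\|_F^2+\|V_{ij}\|_F^2\bigr) \;=\; \tfrac{s}{s+t}B_{ij}+\tfrac{t}{s+t}C_{ij}$$
shows that $\phi_{d,s+t}(W) = \tfrac{s}{s+t}B+\tfrac{t}{s+t}C$, concluding the proof. The main (mild) obstacle is just keeping the indexing of the two tensor-product bases straight when embedding $U\oplus V$ into $\U{d(s+t)}$; all the analytic content is the trivial additivity of the squared Frobenius norm under direct sums.
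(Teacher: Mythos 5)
Your proof is correct and follows essentially the same route as the paper: define the blocks $W_{ij}$ as the direct sums $U_{ij}\oplus V_{ij}$, observe that $W$ is (a block-permutation of) $U\oplus V$ and hence unitary, and use additivity of the squared Frobenius norm. The care you take with the interleaved basis identification is exactly the ``up to a permutation of blocks'' remark in the paper's argument.
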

    \begin{proof}
        Fix $d \geq 2$ and $s,t \geq 1$, and let $B\in \mathsf U_{d,s}$ and $C\in \mathsf U_{d,t}$. There exist $V\in \U{d   s}$ and $W\in \U{d  t}$ such that  $B_{ij}= \frac{1}{s} \|V_{ij}\|_F^2$ and $C_{ij}= \frac{1}{t} \|W_{ij}\|_F^2$. 	Consider $U\in \M{d  (s+t)}$ defined as
        $$U_{ij}:=
        \begin{bmatrix}
            V_{ij} & 0 \\
            0 & W_{ij}
        \end{bmatrix}.
        $$
        In particular, up to a permutation of blocks, $U$ coincides with $V\oplus W$. Thus,  $U \in  \U{d  (s+t)}$ and
        $$\tfrac{1}{s+t} \|U_{ij}\|_F^2=
        \tfrac{1}{s+t} (\|V_{ij}\|_F^2+\|W_{ij}\|_F^2)=
        \tfrac{1}{s+t} (sB_{ij}+tC_{ij}).$$
        That is, $
        \phi_{d,t+s}(U)  = \frac{s}{s+t}B+\frac{t}{s+t}C   \in  \mathsf U_{d,s+t}$, as desired.
    \end{proof}

    \begin{corollary}
        \label{Corollary}Let $d \geq 2$. From Proposition \ref{prop:s-convexity-Uds} we easily conclude that
        \begin{enumerate}
            \item
            For all orders $s_1, \ldots, s_k \geq 1$,  we have
            $$\frac{s_1}{s_1+\ldots+s_k}\mathsf U_{d,s_1}+\ldots+
            \frac{s_k}{s_1+\ldots+s_k}\mathsf U_{d,s_k}
            \subseteq \mathsf U_{d,s_1+\ldots+s_k}.$$

            \item For all $s, n \geq 1$,  we have $\mathsf U_{d,s} \subseteq \mathsf U_{d,ns}$.

            \item  For all $s,t \geq 1$,  we have    $\mathsf U_{d,s}\cap \mathsf U_{d,t} \subseteq \mathsf U_{d,s+t}$.
        \end{enumerate}
    \end{corollary}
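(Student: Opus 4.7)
The plan is to deduce all three parts directly from \cref{prop:s-convexity-Uds}, which is the two-term version of (1). The main work lies in part (1), which I would prove by induction on $k$; parts (2) and (3) are then immediate specializations.

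For (1), I would argue by induction on the number of terms $k$. The base case $k=2$ is exactly \cref{prop:s-convexity-Uds}. For the induction step, set $S_k := s_1 + \ldots + s_k$ and, given $B_i \in \mathsf U_{d,s_i}$, rewrite
$$\sum_{i=1}^{k}\frac{s_i}{S_k}B_i \;=\; \frac{S_{k-1}}{S_k}\left(\sum_{i=1}^{k-1}\frac{s_i}{S_{k-1}}B_i\right) + \frac{s_k}{S_k}B_k.$$
By the induction hypothesis, the bracketed sum lies in $\mathsf U_{d,S_{k-1}}$. A single further application of \cref{prop:s-convexity-Uds} with the two parameters $S_{k-1}$ and $s_k$ then places the whole expression in $\mathsf U_{d,S_{k-1}+s_k} = \mathsf U_{d,S_k}$, completing the induction.

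For (2), I would apply (1) with $k=n$, all orders equal to $s$, and the choice $B_1 = \ldots = B_n = B$ for an arbitrary $B \in \mathsf U_{d,s}$; the convex combination collapses to $B$ itself, which therefore belongs to $\mathsf U_{d,ns}$. For (3), given $B \in \mathsf U_{d,s} \cap \mathsf U_{d,t}$, I would apply \cref{prop:s-convexity-Uds} to $B$ viewed simultaneously as an element of $\mathsf U_{d,s}$ and of $\mathsf U_{d,t}$; the resulting combination $\tfrac{s}{s+t}B + \tfrac{t}{s+t}B$ equals $B$ and lies in $\mathsf U_{d,s+t}$.

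There is no real obstacle here; the corollary is essentially bookkeeping on top of \cref{prop:s-convexity-Uds}. The only point worth emphasising in (2) and (3) is the observation that a single matrix may play the role of every summand in the convex combination: the outer weights sum to one, so the combination reduces back to the original matrix while the ambient set on the right-hand side increases to a larger parameter.
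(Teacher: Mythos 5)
Your proof is correct and follows exactly the route the paper intends: the paper leaves the corollary as an easy consequence of \cref{prop:s-convexity-Uds}, and your induction for (1) together with the ``take every summand to be the same matrix'' trick for (2) and (3) is precisely the intended bookkeeping.
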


    In relation to the second point of the corollary above, note that, in general, we do not have
    $$s \leq t \implies \mathsf U_{d,s} \subseteq \mathsf U_{d,t},$$
    see Corollary \ref{cor:not-increasing-in-s} for counterexamples in this direction.

    We now prove the main theorem of this section: the closed union of all generalized unistochastic matrices constitutes the whole set of bistochastic matrices.

    \begin{theorem}\label{thm:union-is-all-bistochastic}

        For every dimension $d \geq 2$, we have
        $$\overline{\bigcup_{s\geq 1}\mathsf U_{d,s}}=\mathsf B_d.$$
    \end{theorem}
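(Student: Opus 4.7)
The inclusion $\overline{\bigcup_{s\geq 1}\mathsf U_{d,s}} \subseteq \mathsf B_d$ is immediate from Proposition \ref{prop:Uds_in_Bd} combined with the compactness (hence closedness) of the Birkhoff polytope $\mathsf B_d$, so all the work goes into the reverse inclusion $\mathsf B_d \subseteq \overline{\bigcup_{s\geq 1}\mathsf U_{d,s}}$. My plan is to combine Birkhoff--von Neumann with the mixing property in Proposition \ref{prop:s-convexity-Uds} (iterated via Corollary \ref{Corollary}(1)), and handle the passage from real to rational convex coefficients by a density argument.

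Concretely, given $B \in \mathsf B_d$, I would first invoke the Birkhoff--von Neumann theorem to write $B = \sum_{k=1}^N \lambda_k P_k$, where the $P_k$ are distinct permutation matrices, $\lambda_k > 0$, and $\sum_k \lambda_k = 1$. Every permutation matrix lies in $\mathsf U_d = \mathsf U_{d,1}$, and by Proposition \ref{prop:Ud_in_Uds} it therefore lies in $\mathsf U_{d,s}$ for \emph{every} $s \geq 1$; in particular $P_k \in \mathsf U_{d, s_k}$ for any choice of positive integers $s_k$.

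The key step is then rational approximation of the probability vector $(\lambda_1, \ldots, \lambda_N)$: for each integer $m \geq 1$ I would pick positive integers $s_1^{(m)}, \ldots, s_N^{(m)}$ summing to $S_m := \sum_k s_k^{(m)}$ such that $\max_k \bigl| s_k^{(m)}/S_m - \lambda_k \bigr| \to 0$ as $m \to \infty$. For instance, one may take $s_k^{(m)} := \lfloor m \lambda_k \rfloor$ for $k < N$ (replacing $0$'s by $1$'s, which is allowed for $m$ large since $\lambda_k > 0$) and let $s_N^{(m)}$ be whatever is needed to make the sum equal some total $S_m \sim m$. By Corollary \ref{Corollary}(1),
$$B^{(m)} := \sum_{k=1}^N \frac{s_k^{(m)}}{S_m}\, P_k \in \mathsf U_{d, S_m} \subseteq \bigcup_{s \geq 1} \mathsf U_{d,s},$$
since each $P_k$ lies in $\mathsf U_{d, s_k^{(m)}}$ and the weights are rationals with common denominator $S_m$. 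Because $B^{(m)} \to B$ entrywise, this exhibits $B$ as a limit of matrices from the union, completing the proof.

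I do not expect a real obstacle here: the whole argument is a bookkeeping exercise around Corollary \ref{Corollary}(1). The only place requiring mild care is the rational approximation, where one must keep each $s_k^{(m)}$ strictly positive so that the corollary applies; this is automatic for $m$ sufficiently large because all $\lambda_k$ in a minimal Birkhoff decomposition are strictly positive. Note also that the proof gives more than the theorem states: it shows the union is dense \emph{for elementary reasons}, without requiring any topological input beyond convergence of a finite convex combination.
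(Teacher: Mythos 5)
Your proposal is correct and follows essentially the same route as the paper's proof: decompose $B$ via Birkhoff--von Neumann, approximate the weights by rationals with a common denominator (the paper uses $k_\sigma=\lfloor Nt_\sigma\rfloor$ with the identity permutation absorbing the remainder, you use a sequence $s_k^{(m)}/S_m$), and apply Corollary \ref{Corollary}(1) to place the resulting convex combination in $\mathsf U_{d,S_m}$. The only differences are cosmetic: the paper carries out explicit $\epsilon$--$\delta$ bookkeeping with the Frobenius norm, while you argue by entrywise convergence, and you handle the possible vanishing of some integer weights by positivity of the $\lambda_k$, where the paper simply drops zero-weight terms.
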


    \begin{proof}
        We only need to prove the ``$\supseteq$'' inclusion. Fix $d \geq 2$ and $\epsilon>0$.
        Let $B\in \mathsf B_d$. We shall construct $N \in \mathbb N$ and $B_\epsilon \in \mathsf U_{d,N}$ such that $\|B-B_\epsilon\|_F\leq \epsilon$.
        As a bistochastic matrix, $B$ can be written as a convex combination of permutation matrices, i.e., there exists a family $\{t_\sigma\, |\, \sigma \in \mathfrak S_d\}$ of non-negative coefficients such that  $\sum_{\sigma \in \mathfrak S_d} t_\sigma=1$ and $B=\sum_{\sigma \in \mathfrak S_d} t_\sigma P_\sigma$, where $P_\sigma$ is the permutation matrix corresponding to $\sigma \in \mathfrak S_d$.
        Take $\delta :=   {\epsilon}/[{2(d!-1)\sqrt{d}}]  $ and let  $N$ be large enough so that $$\max_{\sigma\neq id}(t_\sigma-\tfrac{k_\sigma}{N})\leq \delta,$$
        where $k_\sigma:=\lfloor Nt_\sigma \rfloor$. Define $k_{id}:=N -\sum_{\sigma\neq id }\ {k_\sigma}$.
        Then $$\tfrac {k_{id} }N - t_{id} =  \sum_{\sigma\neq id }(t_\sigma - \tfrac{k_\sigma}{N}) \in [0, \delta (d! - 1)].$$

        Let us now consider $B_\epsilon:=\sum_{\sigma \in \mathfrak S_d} \frac{k_{\sigma}}N P_\sigma$. Since $P_\sigma \in \mathsf U_d \subset \mathsf U_{d, k_\sigma}$ if $k_\sigma \neq 0$, from Corollary~\ref{Corollary} it follows that  $B_\epsilon \in \mathsf U_{d,N}$. Therefore,
        $$ \|B-B_\epsilon\|_F= \Big\|\sum_{\sigma \in \mathfrak S_d} (t_\sigma-\tfrac{k_\sigma}{N})P_\sigma\Big\|_F\leq \sum_{\sigma \in \mathfrak S_d}|t_\sigma-\tfrac{k_\sigma}{N}|\cdot \sqrt d\leq 2(d!-1)\delta\sqrt{d}=\epsilon,$$
        as desired.
    \end{proof}

    \bigskip

    Next, we present  some numerical simulations regarding the set $\mathsf U_{3,1}$ of $3 \times 3$ unistochastic matrices and its generalized version $\mathsf U_{3,2}$, see Figure \ref{fig:random-U3s}. To decide whether a bistochastic matrix $B$ is an element of $\mathsf U_{d,s}$, we use the $\texttt{NMinimize}$ function of Wolfram Mathematica to try finding a unitary matrix $U \in \U{ds}$ such that $\phi_{d,s}(U) = B$. This method does not guarantee finding the global minimum of non-convex functions, so the results in Figure \ref{fig:random-U3s} are empirical; note however that there is a perfect fit with the theory in the case $(d,s) = (3,1)$.

    \begin{figure}[htb]
        \centering
        \includegraphics[width=0.475\textwidth, height=0.4\textwidth]{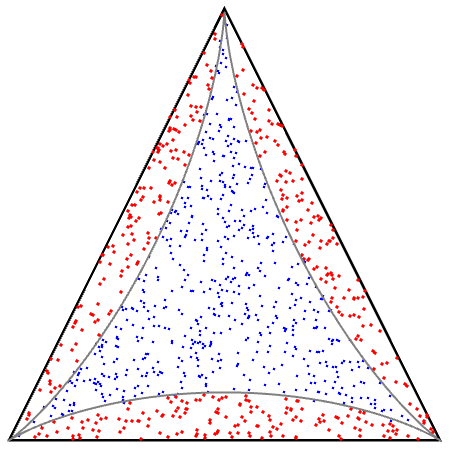} \quad
        \includegraphics[width=0.475\textwidth, height=0.4\textwidth]{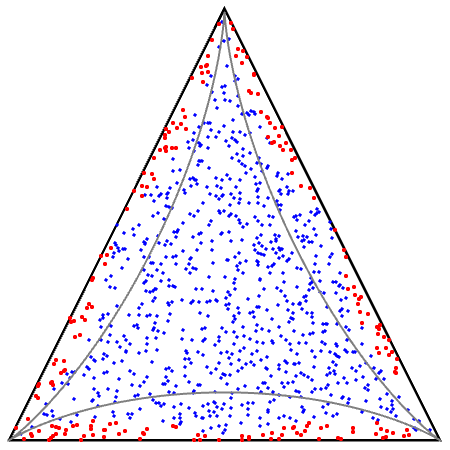}
        \caption{In the simplex defined by the identity, $(123)$, $(321)$ permutation matrices, we plot 1000 uniformly sampled random bistochastic matrices. In the left panel, we plot in blue unistochastic elements, i.e.,~samples $B \in \mathsf U_{3,1}$, and in red samples outside this set. In the right panel, we use the same colours to plot the elements inside and outside the set $\mathsf U_{3,2}$. The gray curves correspond to the bracelet conditions \eqref{eq:bracelet-3} characterizing unistochastic $3 \times 3$ matrices, see Section \ref{sec:bracelet}.}
        \label{fig:random-U3s}
    \end{figure}

    \bigskip

    We end this section by discussing Gutkin's generalization of unistochastic matrices \cite{GUTKIN201328}. In that paper, the author generalizes unistochastic matrices starting from an isometry $$V\colon \mathbb C^d \to \mathbb C^d \otimes \mathbb C^s.$$ This isometry can be seen as a $\mathbb C^s$-valued $d \times d$ matrix:
    $$\mathcal M_{ds \times d}(\mathbb C) \cong \mathcal M_d(\mathbb C^s).$$
    Denoting the vector elements of $V$ as $v_{ij} \in \mathbb C^s$, where $i,j \in [d]$, Gutkin defines
    $$B_{ij} := \|v_{ij}\|^2.$$
    Unfortunately, in general the resulting matrix $B$ is then only column stochastic, and not row stochastic. This can be seen, for example, by considering the case $d=s=2$ and the isometry
    $$V = \begin{bmatrix}
        1 & 0 \\
        0 & 1 \\
        0 & 0 \\
        0 & 0
    \end{bmatrix}$$
    which corresponds to the vectors
    $$v_{11} = \big[ \begin{smallmatrix}
        1 \\ 0 \end{smallmatrix} \big], \quad  v_{12} = \big[ \begin{smallmatrix}
        0 \\ 1 \end{smallmatrix} \big], \quad  v_{21} = v_{22} = \big[ \begin{smallmatrix}
        0 \\ 0 \end{smallmatrix} \big],$$
    which, in turn, lead to the matrix
    $$B=\begin{bmatrix}
        1 & 1 \\ 0 & 0
    \end{bmatrix}.$$
    The error in \cite{GUTKIN201328} seems to stem from Lemma 1 being wrong, and thus equations (1) and (2) in that paper not being equivalent.

    \section{Generalizing the bracelet framework}\label{sec:bracelet}

    In this section we generalize, in the same spirit as the main Definition \ref{def:generalized-unistochastic}, the notions of \emph{bracelet conditions} and \emph{bracelet matrices}, ideas originating in \cite{au1979orthostochastic}. These notions play an important role in the study of unistochastic matrices, since bracelet conditions fully characterize the first non-trivial case, that of dimension $d=3$. Indeed, a $3 \times 3$ bistochastic matrix is unistochastic if and only if it satisfies the bracelet conditions \cite{au1979orthostochastic}, i.e.~iff it is a bracelet matrix.

    We first review the standard notion of bracelet condition. The intuitive idea behind it is that the elements of two rows of a unistochastic matrix corresponding to the same column cannot be too large simultaneously with respect to the other row elements. This is because the scalar product of the corresponding rows of the unitary matrix needs to be zero, so each individual term of the sum cannot be too large in magnitude with respect to the others. This intuition is encoded in the following definition:

    \begin{equation}\label{eq:def-bracelet-condition}
        \mathsf {Brac}_{d}:=\bigg\{ (\alpha,\beta)\in \Delta_d^2 \, : \, \forall i \in [d], \, \sqrt{\alpha_i \beta_i} \leq \sum\limits_{\substack{j=1 \\ j\neq i}}^d  \sqrt{\alpha_j \beta_j} \bigg\}.
    \end{equation}

    In the formula above, recall that the  $(d-1)$-dimensional probability simplex is the set of all probability vectors in $\mathbb R^d$:
    $$\Delta_d:=\Big\{\alpha \in \mathbb R^d \, : \, \forall i \in [d], \, \alpha_i\geq 0 \text{ and } \sum_{i=1}^d \alpha_i=1\Big\}.$$

    We have the following important definition; note that the term ``bracelet matrix / condition'' was introduced in \cite{rajchel2022algebraic}.

    \begin{definition}\label{def:bracelet-matrix}
        A bistochastic matrix $B \in \mathsf B_d$ is said to be a \emph{bracelet matrix} if all pairs of rows and all pairs of columns of $B$ satisfy the bracelet condition from \eqref{eq:def-bracelet-condition}. We introduce the set of bracelet matrices
        \begin{equation}\label{eq:def-bracelet-matrix}
            \mathsf L_d:=\Big\{B\in \mathsf B_d : \,  \forall i_1\neq i_2 \, (B_{i_1, \cdot}, B_{i_2, \cdot}) \in \mathsf{Brac}_d\, \text{ and }\, \forall j_1\neq j_2 \, (B_{\cdot, j_1}, B_{\cdot, j_2}) \in \mathsf{Brac}_d \big\}.
        \end{equation}

    \end{definition}

    It was observed in \cite{rajchel2022algebraic} that being bracelet is a necessary condition for unistochasticity. We give here the proof of this claim for the sake of completeness.

    \begin{proposition}
        \label{prop:U-subseteq-L}
        For all dimensions $d \geq 2$, we have $\mathsf U_d\subseteq \mathsf L_d$.
    \end{proposition}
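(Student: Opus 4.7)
The plan is straightforward: the bracelet condition is essentially the triangle inequality applied to the orthogonality relations between rows (or columns) of the underlying unitary matrix. Let $B \in \mathsf{U}_d$ and fix $U \in \U{d}$ with $B_{ij} = |U_{ij}|^2$. I first consider the row case, then observe that the column case follows by the same argument applied to $U^*$ (whose entries have the same moduli, transposed).

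Given two distinct row indices $i_1 \neq i_2$, I would set $\alpha_j := B_{i_1,j} = |U_{i_1,j}|^2$ and $\beta_j := B_{i_2,j} = |U_{i_2,j}|^2$, so that
$$\sqrt{\alpha_j \beta_j} = |U_{i_1,j}| \cdot |U_{i_2,j}| = \bigl|U_{i_1,j} \, \overline{U_{i_2,j}}\bigr|.$$
The orthogonality of rows $i_1$ and $i_2$ of $U$ yields $\sum_{j=1}^d U_{i_1,j} \, \overline{U_{i_2,j}} = 0$. Denoting $z_j := U_{i_1,j} \, \overline{U_{i_2,j}} \in \mathbb{C}$, this says $\sum_{j=1}^d z_j = 0$, which for any fixed index $i$ rearranges to $z_i = -\sum_{j \neq i} z_j$. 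Applying the triangle inequality and $|z_j| = \sqrt{\alpha_j \beta_j}$ gives
$$\sqrt{\alpha_i \beta_i} = |z_i| \leq \sum_{\substack{j=1 \\ j \neq i}}^d |z_j| = \sum_{\substack{j=1 \\ j \neq i}}^d \sqrt{\alpha_j \beta_j},$$
which is precisely the bracelet condition \eqref{eq:def-bracelet-condition} for the pair $(B_{i_1,\cdot}, B_{i_2,\cdot})$.

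The column case is handled identically using the orthogonality of columns $j_1 \neq j_2$ of $U$, producing zero sums of the complex numbers $U_{i,j_1} \, \overline{U_{i,j_2}}$. Thus every pair of rows and every pair of columns of $B$ satisfies the bracelet condition, so $B \in \mathsf{L}_d$ by \cref{def:bracelet-matrix}. There is no real obstacle here: the proof is just the triangle inequality applied to the orthogonality of the rows and columns of $U$, which is why the bracelet condition is ``necessary but not sufficient'' for $d \geq 4$ (the triangle inequality uses only the modulus of the $z_j$, forgetting their phases, which is where higher-dimensional information is lost).
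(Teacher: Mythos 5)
Your argument is correct and is essentially identical to the paper's proof: both use the orthogonality of two rows (resp.\ columns) of $U$ to write one term $U_{i_1 k}\overline{U_{i_2 k}}$ as minus the sum of the others, then apply the triangle inequality and the identity $|U_{i_1 j}\overline{U_{i_2 j}}| = \sqrt{B_{i_1 j}B_{i_2 j}}$. Your closing remark about the loss of phase information being the source of non-sufficiency for $d \geq 4$ is a nice additional observation, but the core argument matches the paper's.
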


    \begin{proof}Let $B = (B_{ij})_{i,j} \in \mathsf U_d$ be such that $B_{ij} = |U_{ij}|^2$, where $U \in \mathcal{U}(d)$ is a corresponding unitary matrix.
        Fix two row indices $i_1\neq i_2$.
        By unitarity,  for all $k \in [d]$ we have
        $$-U_{i_1k}\overline{U_{i_2k}}=
        \sum\limits_{\substack{j=1 \\ j\neq k}}^d U_{i_1j}\overline{U_{i_2j}}.$$
        Taking norm and applying the triangle inequality, for all $k \in [d]$ we obtain
        $$|U_{i_1k}\overline{U_{i_2k}}|\leq
        \sum\limits_{\substack{j=1 \\ j\neq k}}^d |U_{i_1j}\overline{U_{i_2j}}|,$$
        which translates into
        $$\sqrt{B_{i_1k}B_{i_2k}}\leq
        \sum\limits_{\substack{j=1 \\ j\neq k}}^d \sqrt{B_{i_1j}B_{i_2j}}.$$
        A similar computation shows that the columns of $B$ also satisfy the bracelet condition from Eq.~\eqref{eq:def-bracelet-condition}; hence, $B \in \mathsf L_d$, as claimed.
    \end{proof}

    The bracelet conditions characterize unistochasticity for $3 \times 3$ matrices (i.e., $\mathsf U_3 = \mathsf L_3$, see \cite{au1979orthostochastic}), while being only necessary for $d \geq 4$, see \cite{rajchel2022algebraic}. Note that the complete description of the non-convex set $\mathsf U_3$ was obtained, thanks to the characterization in terms of bracelet conditions, in \cite{nakazato1996set}. For example, for the bistochastic matrices studied in Figure \ref{fig:random-U3s}, which are of the form
    $$B = \begin{bmatrix}
        \lambda_1 & \lambda_2 & \lambda_3 \\
        \lambda_3 & \lambda_1 & \lambda_2 \\
        \lambda_2 & \lambda_3 & \lambda_1
    \end{bmatrix},$$
    the bracelet conditions read
    \begin{equation}\label{eq:bracelet-3}
        \sqrt{\lambda_i \lambda_j} \leq \sqrt{\lambda_i \lambda_k} + \sqrt{\lambda_j \lambda_k},
    \end{equation}
    for any permutation $(i,j,k)$ of the set $\{1,2,3\}$. The matrices satisfying these conditions are precisely the unistochastic matrices, and they correspond to the region delimited by the gray curves in Figure~\ref{fig:random-U3s}.

    \subsection{Generalized bracelet conditions}

    Since the idea behind the bracelet condition from Eq.~\eqref{eq:def-bracelet-condition} was to use the orthogonality of the rows/columns of a unitary operator, we generalize this insight to our setting in the following definition, encoding in it the block-orthogonality of block unitary matrices.

    \begin{definition}\label{def:generalized-bracelet-condition}
        A pair of probability vectors $\alpha,\beta \in \Delta_d$ is said to satisfy the \emph{generalized bracelet condition} of order $s$ if they correspond to the normalized squares of Frobenius norms of the blocks of some unitary matrix $U \in \mathcal {U}(d s)$:
        \begin{align} \label{eq:def-generalized-bracelet-condition}
            \nonumber \mathsf{Brac}_{d,s}:=\Big\{(\alpha,\beta)\in \Delta_d^2 \, : \, &\exists A_1,\ldots, A_d, B_1, \ldots, B_d \in \M{s} \text{ such that }\\[-0.33em]
            & \sum_{i=1}^d A_iA_i^*=\sum_{i=1}^d B_iB_i^*=I_s, \quad \sum_{i=1}^d A_iB_i^*=0_s, \\
            \nonumber &
            \tfrac{1}{s} \|A_i\|_F^2=\alpha_i
            \:  \text{ and } \
            \tfrac{1}{s}    \|B_i\|_F^2=\beta_i
            \quad \forall i \in [d]
            \Big\}.
        \end{align}
    \end{definition}
    The conditions above mean  precisely that we can expand the 2-row block matrix  $$
    \begin{bmatrix}
        A_1 & \cdots & A_d \\
        B_1 & \cdots & B_d
    \end{bmatrix}$$
    to a full unitary matrix $U\in \U{ds}$.
    Note that the case $d=1$ of the bracelet conditions is empty for every order, i.e.,  $\mathsf{Brac}_{1,s}=\emptyset$ for every $s \geq 1$; indeed, if $A_1A_1^*=B_1B_1^*=I_s$, then
    $A_1,B_1\in \U{s}$, which implies that $A_1B_1^*\neq 0$.

    We first show that the newly introduced conditions from Eq.~\eqref{eq:def-generalized-bracelet-condition} do indeed generalize the standard bracelet conditions from Eq.~\eqref{eq:def-bracelet-condition}.

    \begin{proposition}\label{prop:Bd1_is_Bd}
        For all dimensions $d \geq 2$, the generalized bracelet conditions of order $s=1$ are precisely the usual bracelet conditions: $\mathsf {Brac}_{d,1}=\mathsf {Brac}_d$.
    \end{proposition}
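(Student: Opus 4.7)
The plan is to unfold the $s=1$ case of Definition~\ref{def:generalized-bracelet-condition} and reduce it to a classical closed-polygon problem. When $s=1$, the blocks $A_i, B_i$ are just complex numbers, say $a_i, b_i \in \mathbb C$. The two equalities $\sum_i |a_i|^2 = \sum_i |b_i|^2 = 1$ are automatic from $\alpha,\beta \in \Delta_d$ once we impose $|a_i|^2 = \alpha_i$ and $|b_i|^2 = \beta_i$, so the only remaining constraint is $\sum_i a_i \overline{b_i} = 0$. Thus $\mathsf{Brac}_{d,1}$ is exactly the set of $(\alpha,\beta)\in\Delta_d^2$ for which there exist $a_i,b_i \in \mathbb C$ with $|a_i|^2=\alpha_i$, $|b_i|^2=\beta_i$, and $\sum_i a_i\overline{b_i}=0$.

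For the inclusion $\mathsf{Brac}_{d,1} \subseteq \mathsf{Brac}_d$, I would repeat the argument of Proposition~\ref{prop:U-subseteq-L} verbatim: rewriting $\sum_i a_i\overline{b_i}=0$ as $a_k \overline{b_k} = -\sum_{j\neq k} a_j \overline{b_j}$, applying the triangle inequality, and using $|a_j \overline{b_j}| = \sqrt{\alpha_j\beta_j}$ yields the bracelet inequality at index $k$ for every $k$.

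For the reverse inclusion, write $a_i = \sqrt{\alpha_i}\,e^{\mathrm{i}\theta_i}$ and $b_i = \sqrt{\beta_i}$ (we may freely set $\phi_i = 0$). Then the condition $\sum_i a_i \overline{b_i} = 0$ becomes
$$\sum_{i=1}^d c_i\, e^{\mathrm{i}\theta_i} = 0, \qquad \text{with } c_i := \sqrt{\alpha_i\beta_i} \geq 0.$$
This is the standard closed-polygon problem: can one orient segments of lengths $c_1,\dots,c_d$ so that they form a closed polygon? The hypothesis $(\alpha,\beta) \in \mathsf{Brac}_d$ is exactly the polygon inequality $\max_k c_k \leq \sum_{j\neq k} c_j$, so what I need is the classical fact that this inequality is also sufficient for closability.

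I would prove this closability by induction on $d$. The base case $d=2$ forces $c_1=c_2$, and we take $\theta_1=0$, $\theta_2=\pi$. For $d\geq 3$, order the $c_i$ so that $c_1 \geq c_2 \geq \cdots \geq c_d$ and combine the two smallest into a single segment of length $c' := c_{d-1}+c_d$; a short verification shows that $(c_1,\dots,c_{d-2},c')$ still satisfies the polygon inequality (the only nontrivial check, $c_1 \leq c_2+\cdots+c_{d-2}+c'$, is immediate since the right-hand side equals $c_2+\cdots+c_d$). Applying the inductive hypothesis closes the reduced polygon, and un-combining $c_{d-1}$ and $c_d$ while keeping them collinear yields the desired closed polygon on $d$ sides. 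This delivers angles $\theta_i$ realizing $\sum_i c_i e^{\mathrm{i}\theta_i}=0$, hence $(\alpha,\beta)\in\mathsf{Brac}_{d,1}$.

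The main (and really only) obstacle is the polygon-closure lemma in the last paragraph; all the rest is unpacking definitions. Since this lemma is a classical result, I would either cite it or include the short inductive proof sketched above.
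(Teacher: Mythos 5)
Your overall strategy is the same as the paper's: unfold the $s=1$ case to complex numbers, get the inclusion $\mathsf{Brac}_{d,1}\subseteq\mathsf{Brac}_d$ by the argument of Proposition~\ref{prop:U-subseteq-L}, and reduce the converse to the ``closed polygon'' fact that lengths $c_i=\sqrt{\alpha_i\beta_i}$ satisfying $\max_k c_k\le\sum_{j\ne k}c_j$ admit phases with $\sum_i c_ie^{\mathrm{i}\theta_i}=0$ (the paper invokes this as a generalized triangle inequality without proving it). Up to that point everything you write is correct.

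The gap is in your inductive proof of the polygon-closure lemma. Merging the two smallest lengths into a single segment of length exactly $c'=c_{d-1}+c_d$ forces those two segments to point in the same direction, and the reduced list $(c_1,\dots,c_{d-2},c')$ need \emph{not} satisfy the polygon inequality: the new element $c'$ can become the strict maximum. The check you call ``the only nontrivial one'' is not the relevant one in that case; you would also need $c'\le c_1+\cdots+c_{d-2}$, which can fail. Concretely, for $d=3$ and $c_1=c_2=c_3=1$ (an equilateral triangle, which certainly closes up) your reduction produces the pair $(1,2)$, and the base case demands $1=2$. Indeed for $d=3$ your reduction succeeds only in the degenerate case $c_1=c_2+c_3$, so the induction breaks at its first step. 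The lemma itself is true and the gap is fixable: for instance, allow the merged segment to have any length in $[c_{d-1}-c_d,\,c_{d-1}+c_d]$ (i.e.\ permit an angle between the two merged segments) and choose it by an intermediate-value argument so that the reduced list satisfies the inequality; alternatively, partition the indices into three groups whose sums $s_1,s_2,s_3$ obey the triangle inequality (put the largest $c_1$ alone in one group and distribute the rest greedily between the other two, so that $|s_2-s_3|\le c_2\le s_1$) and lay the segments of each group collinearly along the sides of a triangle with side lengths $s_1,s_2,s_3$. With such a repair, or with a citation for the lemma, your proof matches the paper's.
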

    \begin{proof} For $s=1$ the generalized bracelet condition takes the form
        $$
        \mathsf {Brac}_{d,1}=\left\{
        \begin{aligned}
            &(\alpha,\beta)\in \Delta_d^2 \, : \,
            \exists a_1,\ldots, a_d, b_1, \ldots, b_d \in \mathbb{C} \text{ such that} \\
            & \sum\nolimits_i |a_i|^2=\sum\nolimits_i |b_i|^2=1, \, \sum\nolimits_i a_i\bar b_i=0,\, \text{ and } \,
            |a_i|^2=\alpha_i, \,
            |b_i|^2=\beta_i \ \ \forall i\in [d]
        \end{aligned}\:
        \right\}.$$
        The inclusion $\mathsf {Brac}_{d,1}\subseteq \mathsf {Brac}_d$ follows by mimicking the    proof of  Proposition \ref{prop:U-subseteq-L}.
        The converse inclusion can be thought of   as a generalized version of the triangle inequality: for $l_1 \geq \ldots  \geq l_d \geq 0$ satisfying $ l_1 \leq \sum_{j=2}^d l_j$, there exist   $\theta_1,\ldots, \theta_d\in [0,2\pi)$ such that $\sum_{j=1}^d l_je^{\mathrm{i}\theta_j}=0$. Therefore, for any $(\alpha,\beta) \in \mathsf {Brac}_d$ we can choose the phases   $\theta_1,\ldots, \theta_d$ so that   $a_j := \sqrt{\smash[b]{\alpha_j}}e^{{\mathrm{i}\theta_j}}$ and $b_j := \sqrt{\smash[b]{\beta_j}}$ satisfy $\sum\nolimits_{j=1}^d a_j\bar b_j=0$. The other conditions follow trivially, and so   $(\alpha,\beta) \in \mathsf {Brac}_{d,1}$, as claimed.
    \end{proof}

    As it is the case for the sets $\mathsf{U}_{d,s}$ (see Proposition \ref{prop:s-convexity-Uds}), the sets $\mathsf{Brac}_{d,s}$ satisfy the following ``convexity'' relation:
    \begin{proposition}\label{prop:s-convexity-Bracds}
        For every   $d \geq 2$ and     $s,t \geq 1$, we have
        $\frac{s}{s+t}\mathsf {Brac}_{d,s}+\frac{t}{s+t}\mathsf {Brac}_{d,t}\subseteq \mathsf {Brac}_{d,s+t}.$
    \end{proposition}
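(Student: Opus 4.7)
The plan is to mimic the proof of Proposition \ref{prop:s-convexity-Uds}, replacing the unitary matrices $V, W$ there by the ``partial'' witnesses (the $A_i$'s and $B_i$'s) that certify membership in the generalized bracelet sets. Suppose $(\alpha,\beta) \in \mathsf{Brac}_{d,s}$ is witnessed by matrices $A_1,\ldots,A_d,B_1,\ldots,B_d \in \M{s}$ and $(\alpha',\beta') \in \mathsf{Brac}_{d,t}$ is witnessed by matrices $A'_1,\ldots,A'_d,B'_1,\ldots,B'_d \in \M{t}$, each satisfying the three orthogonality relations and the Frobenius normalization from Definition~\ref{def:generalized-bracelet-condition}.

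Next, define the block-diagonal matrices
$$\tilde A_i := \begin{bmatrix} A_i & 0 \\ 0 & A'_i \end{bmatrix}, \qquad \tilde B_i := \begin{bmatrix} B_i & 0 \\ 0 & B'_i \end{bmatrix} \ \in \M{s+t}, \qquad i \in [d].$$
A direct computation shows
$$\sum_{i=1}^d \tilde A_i \tilde A_i^* = \begin{bmatrix} \sum_i A_iA_i^* & 0 \\ 0 & \sum_i A'_i A'^*_i \end{bmatrix} = I_s \oplus I_t = I_{s+t},$$
and the analogous identities $\sum_i \tilde B_i \tilde B_i^* = I_{s+t}$ and $\sum_i \tilde A_i \tilde B_i^* = 0_{s+t}$ follow in the same way from the vanishing of the off-diagonal blocks. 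Hence the $2(s+t) \times d(s+t)$ matrix with row-blocks $\tilde A_i, \tilde B_i$ has orthonormal rows and therefore extends to a unitary in $\U{d(s+t)}$.

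Finally, the Frobenius norms decompose additively on block-diagonal matrices, giving
$$\tfrac{1}{s+t}\|\tilde A_i\|_F^2 = \tfrac{1}{s+t}\bigl(\|A_i\|_F^2 + \|A'_i\|_F^2\bigr) = \tfrac{s}{s+t}\alpha_i + \tfrac{t}{s+t}\alpha'_i,$$
and symmetrically for $\tilde B_i$. This exhibits $\tfrac{s}{s+t}(\alpha,\beta) + \tfrac{t}{s+t}(\alpha',\beta')$ as an element of $\mathsf{Brac}_{d,s+t}$, as required. I do not foresee any real obstacle: the argument is essentially the direct-sum construction of Proposition~\ref{prop:s-convexity-Uds} restricted to the first two row-blocks, and the only thing to check is that the three block-orthogonality relations defining $\mathsf{Brac}_{d,s+t}$ are preserved under direct sums, which is immediate.
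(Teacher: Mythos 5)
Your proof is correct and follows essentially the same route as the paper: the paper's proof is exactly this direct-sum construction, pairing the witnesses blockwise as $A_i \oplus C_i$ and $B_i \oplus D_i$ and noting that the orthogonality relations and the normalized Frobenius norms behave additively. No gaps.
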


    \begin{proof}
        Consider pairs of probability vectors $(\alpha, \beta) \in \mathsf{Brac}_{d,s}$ and $(\mu, \nu) \in \mathsf{Brac}_{d,t}$ together with the generating matrices $A_1,\ldots, A_d, B_1,\ldots, B_d \in \M{s}$ and $C_1,\ldots, C_d, D_1,\ldots, D_d \in \M{t}$, i.e.,
        $$\begin{bmatrix}
            A_1 & \cdots & A_d \\
            B_1 & \cdots & B_d
        \end{bmatrix}
        \xmapsto{\frac{1}{s}\| \cdot \|^2_F }
        \begin{bmatrix}
            \alpha \\
            \beta
        \end{bmatrix}
        =
        \begin{bmatrix}
            \alpha_1 & \cdots & \alpha_d \\
            \beta_1 & \cdots & \beta_d
        \end{bmatrix}
        $$
        $$\begin{bmatrix}
            C_1 & \cdots & C_d \\
            D_1 & \cdots & D_d
        \end{bmatrix}
        \xmapsto{\frac{1}{t}\| \cdot \|^2_F }
        \begin{bmatrix}
            \mu \\
            \nu
        \end{bmatrix}
        =
        \begin{bmatrix}
            \mu_1 & \cdots & \mu_d \\
            \nu_1 & \cdots & \nu_d
        \end{bmatrix}
        $$
        Then, using a direct sum construction, we have:
        $$\begin{bmatrix}
            A_1\oplus C_1 & \cdots & A_d\oplus C_d \\
            B_1\oplus D_1 & \cdots & B_d\oplus D_d
        \end{bmatrix}
        \xmapsto{\frac{1}{s+t}\| \cdot \|^2_F }
        \begin{bmatrix}
            \frac{s\alpha_1 + t\mu_1}{s+t} & \cdots & \frac{s\alpha_d + t\mu_d}{s+t}\\[0.33em]
            \frac{s\beta_1 + t\nu_1}{s+t} & \cdots & \frac{s\beta_d + t\nu_d}{s+t}
        \end{bmatrix}
        =
        \begin{bmatrix}
            \frac{s\alpha + t\mu}{s+t} \\[0.33em]
            \frac{s\beta + t\nu}{s+t}
        \end{bmatrix}.
        $$
        Hence, $\frac s{s+t}(\alpha, \beta) + \frac t{s+t}(\mu, \nu) \in \mathsf{Brac}_{d,s+t}$, as desired.
    \end{proof}

    The result above easily generalizes to more than two summands.

    \begin{corollary}
        For all dimensions $d \geq 2$ and all orders $s_1, \ldots, s_k \geq 1$, we have
        $$\frac{s_1}{s_1+\ldots+s_k}\mathsf {Brac}_{d,s_1}+\ldots+
        \frac{s_k}{s_1+\ldots+s_k}\mathsf {Brac}_{d,s_k}
        \subseteq \mathsf {Brac}_{d,s_1+\ldots+s_k}.$$
        In particular, for all $s \geq 1$ we have $\mathsf {Brac}_{d,1} =\mathsf {Brac}_{d} \subseteq \mathsf {Brac}_{d,s}$.
    \end{corollary}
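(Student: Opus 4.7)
The plan is to prove the corollary by induction on $k$, the number of summands, using Proposition \ref{prop:s-convexity-Bracds} as the binary base case. The argument is essentially a standard telescoping of a Minkowski sum, so I do not expect any real obstacle; the only point that requires a moment of thought is the ``in particular'' statement, which needs that a set is contained in any Minkowski sum of scaled copies of itself.

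Concretely, fix $d \geq 2$ and write $S_j := s_1 + \ldots + s_j$ for partial sums, so that $S_k$ is the total weight. The case $k=1$ is trivial, and $k=2$ is precisely Proposition \ref{prop:s-convexity-Bracds}. For the inductive step, assume the statement holds for $k-1$ summands. Then I would regroup the sum as
\begin{equation*}
\sum_{j=1}^{k} \frac{s_j}{S_k}\mathsf{Brac}_{d,s_j}
\;=\; \frac{S_{k-1}}{S_k}\left(\sum_{j=1}^{k-1} \frac{s_j}{S_{k-1}}\mathsf{Brac}_{d,s_j}\right) + \frac{s_k}{S_k}\mathsf{Brac}_{d,s_k}.
\end{equation*}
By the inductive hypothesis the parenthesised Minkowski sum is contained in $\mathsf{Brac}_{d,S_{k-1}}$, so the whole right-hand side is contained in $\frac{S_{k-1}}{S_k}\mathsf{Brac}_{d,S_{k-1}} + \frac{s_k}{S_k}\mathsf{Brac}_{d,s_k}$, which by a single application of Proposition \ref{prop:s-convexity-Bracds} (with the two orders $S_{k-1}$ and $s_k$) sits inside $\mathsf{Brac}_{d,S_{k-1}+s_k} = \mathsf{Brac}_{d,S_k}$, closing the induction.

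For the ``in particular'' assertion, the equality $\mathsf{Brac}_{d,1} = \mathsf{Brac}_d$ is exactly Proposition \ref{prop:Bd1_is_Bd}. To obtain the inclusion $\mathsf{Brac}_d \subseteq \mathsf{Brac}_{d,s}$, I would specialise the first part with $k = s$ and $s_1 = \cdots = s_k = 1$, giving
\begin{equation*}
\tfrac{1}{s}\mathsf{Brac}_{d,1} + \tfrac{1}{s}\mathsf{Brac}_{d,1} + \cdots + \tfrac{1}{s}\mathsf{Brac}_{d,1} \;\subseteq\; \mathsf{Brac}_{d,s}.
\end{equation*}
Any $(\alpha,\beta) \in \mathsf{Brac}_d = \mathsf{Brac}_{d,1}$ lies in the left-hand Minkowski sum by taking the same representative $(\alpha,\beta)$ in each of the $s$ slots (so that the convex combination with equal weights $1/s$ reproduces $(\alpha,\beta)$ itself), which yields the claimed inclusion. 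No step here is delicate; the main thing is to write the regrouping correctly and to remember that membership of a point in a Minkowski sum of scaled copies of a set is witnessed by the trivial ``repeat the same element'' choice.
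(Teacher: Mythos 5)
Your proof is correct and takes essentially the same route as the paper, which offers no explicit argument beyond noting that the two-summand Proposition \ref{prop:s-convexity-Bracds} ``easily generalizes'': the induction you spell out, plus the repeat-the-same-element witness giving $\mathsf{Brac}_{d,1}\subseteq \tfrac1s\mathsf{Brac}_{d,1}+\cdots+\tfrac1s\mathsf{Brac}_{d,1}$ for the ``in particular'' part, is exactly that routine generalization.
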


    The generalized bracelet conditions on probability vectors introduced in \cref{def:generalized-bracelet-condition} are not easy to check in general, due to the fact that one needs to solve a quadratic problem in $s \times s$ matrices. We present next a necessary condition for a pair of probability vectors to satisfy the generalized bracelet conditions that is easily verifiable.

    \begin{proposition}
        For any pair of probability vectors satisfying the generalized bracelet condition $(\alpha,\beta) \in \mathsf{Brac}_{d,s}$, it holds that, for all $i \in [d]$ such that $\beta_i \geq 1-1/s$,
        $$\sqrt{\alpha_i \vphantom{\beta_i}}\sqrt{s \beta_i - (s-1)} \leq s \sum_{\substack{j=1\\j \neq i}}^d \sqrt{\alpha_j \beta_j}.$$
    \end{proposition}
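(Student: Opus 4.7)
The plan is to exploit the block-orthogonality relation $\sum_j A_jB_j^* = 0$ coming from the generalized bracelet condition \eqref{eq:def-generalized-bracelet-condition}, together with a structural bound on the singular values of $B_i$. The key observation is that, since $B_iB_i^* \leq \sum_j B_jB_j^* = I_s$, every singular value of $B_i$ lies in $[0,1]$; combined with $\sum_{k=1}^s \sigma_k(B_i)^2 = \|B_i\|_F^2 = s\beta_i$, this forces
$$\sigma_{\min}(B_i)^2 \;\geq\; s\beta_i - (s-1),$$
which is exactly the quantity under the square root on the left-hand side and is non-negative precisely when $\beta_i \geq 1 - 1/s$.

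Isolating $A_iB_i^* = -\sum_{j\neq i} A_jB_j^*$ from the block-orthogonality, I would take Frobenius norms on both sides. The triangle inequality together with the submultiplicativity $\|XY\|_F \leq \|X\|_F\|Y\|_F$ yields the upper bound
$$\|A_iB_i^*\|_F \;\leq\; \sum_{j\neq i} \|A_j\|_F\,\|B_j\|_F \;=\; s\sum_{j\neq i}\sqrt{\alpha_j\beta_j},$$
using $\|A_j\|_F = \sqrt{s\alpha_j}$ and $\|B_j\|_F = \sqrt{s\beta_j}$.

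For the matching lower bound I would write
$$\|A_iB_i^*\|_F^2 \;=\; \Tr\bigl(B_i^*B_i \, A_i^*A_i\bigr) \;\geq\; \lambda_{\min}(B_i^*B_i)\,\Tr(A_i^*A_i) \;=\; \sigma_{\min}(B_i)^2 \cdot s\alpha_i,$$
invoking the elementary inequality $\Tr(PQ) \geq \lambda_{\min}(P)\,\Tr(Q)$ for positive semidefinite $P, Q$. Plugging in the singular-value bound from the first paragraph and taking square roots gives $\|A_iB_i^*\|_F \geq \sqrt{s\alpha_i(s\beta_i-(s-1))}$. Chaining with the upper bound and dividing by $\sqrt{s}$ produces
$$\sqrt{\alpha_i\,(s\beta_i-(s-1))} \;\leq\; \sqrt{s}\sum_{j\neq i}\sqrt{\alpha_j\beta_j} \;\leq\; s\sum_{j\neq i}\sqrt{\alpha_j\beta_j},$$
which is the claim (in fact marginally sharper, with $\sqrt{s}$ in place of $s$).

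The only substantive step is the singular-value inequality in the first paragraph: this is the sole place where the hypothesis $\beta_i \geq 1 - 1/s$ enters, and recognising that $\sum_j B_jB_j^* = I_s$ forces all singular values of $B_i$ into $[0,1]$ and thereby concentrates the ``defect'' $s - \|B_i\|_F^2$ entirely in $\sigma_{\min}(B_i)^2$ is what makes the estimate substantive. Everything else is a routine chain of Schatten-norm manipulations.
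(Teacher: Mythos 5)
Your proof is correct, and structurally it follows the same route as the paper: isolate $A_iB_i^* = -\sum_{j\neq i}A_jB_j^*$ from the block-orthogonality, note that $\sum_j B_jB_j^* = I_s$ forces every singular value of $B_i$ into $[0,1]$ so that $\sigma_{\min}(B_i)^2 \geq s\beta_i-(s-1)$, and bound the right-hand side term by term by $\|A_j\|_F\|B_j\|_F = s\sqrt{\alpha_j\beta_j}$. The difference is the choice of norm: the paper measures both sides in the operator norm, lower-bounding the left side via the singular-value product inequality $\sigma_1(A_iB_i^*)\geq \sigma_1(A_i)\,\sigma_s(B_i)$ (citing Bhatia) together with $\sigma_1(A_i)\geq s^{-1/2}\|A_i\|_F$, whereas you work entirely with the Frobenius norm and replace that lemma by the elementary trace inequality $\Tr(PQ)\geq \lambda_{\min}(P)\Tr(Q)$ for positive semidefinite $P,Q$ applied to $\|A_iB_i^*\|_F^2=\Tr(B_i^*B_i\,A_i^*A_i)$. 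Your route avoids the external citation and, because it keeps the full weight $\|A_i\|_F^2=s\alpha_i$ rather than only the top singular value $\sigma_1(A_i)^2\geq\alpha_i$, it delivers the marginally sharper bound with $\sqrt{s}$ in place of $s$ on the right-hand side, which implies the stated proposition since $\sqrt{s}\leq s$; all the individual steps (submultiplicativity of $\|\cdot\|_F$, the trace inequality, and the use of the hypothesis $\beta_i\geq 1-1/s$ only to make the square root meaningful) check out.
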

    \begin{proof}
        The inequality is a simple consequence of the conditions on the matrices $A_i, B_j$ from \cref{def:generalized-bracelet-condition}. Start from
        $$-A_i B_i^* = \sum_{\substack{j=1\\j \neq i}}^d A_j B_j^*,$$
        where $i \in [d]$, and take the operator norm of both sides. For the left-hand side, we have the following lower bound ($\sigma_k(\cdot)$ denote below the singular values of a matrix, ordered decreasingly):
        $$\|-A_i B_i^*\| = \sigma_1(A_iB_i^*) \geq \sigma_1(A_i) \sigma_s(B_i),$$
        where we apply \cite[Eq.~(III.20)]{bhatia1997matrix}. Clearly, $\sigma_1(A_i) = \|A_i\| \geq s^{-1/2} \|A_i\|_F = \sqrt{\alpha_i}$. We also have
        $$\sigma_s(B_i)^2 = \|B_i\|_F^2 - \sum_{k=1}^{s-1} \sigma_k(B_i)^2 \geq s \beta_i - (s-1),$$
        where we have used the fact that that all the singular values of $B_i$ do not exceed $1$, which follows from $\sum_k B_k^{\vphantom{*}}B_k^* = I_s$.

        Moving now to the right-hand side, we have:
        $$\Big\|\sum_{\substack{j=1\\j \neq i}}^d A_j B_j^*\Big\| \leq \sum_{\substack{j=1\\j \neq i}}^d \|A_j\| \|B_j\| \leq \sum_{\substack{j=1\\j \neq i}}^d \|A_j\|_F \|B_j\|_F =  s\sum_{\substack{j=1\\j \neq i}}^d \sqrt{\alpha_j \beta_j},$$
        concluding the proof.
    \end{proof}
    Note that in the case $s=1$, the necessary conditions given in the Proposition above reduce to the usual bracelet conditions from \cref{eq:def-bracelet-condition}, exactly as the generalized bracelet conditions.

    \medskip

    In what follows, we analyze the sets $\mathsf {Brac}_{d,s}$ for different values of the dimension $d$ and of the generalization parameter $s$. We will show that for $d=2$ nothing new happens when  the value of $s$ changes, i.e., $\mathsf {Brac}_{2,s}=\mathsf {Brac}_{2}$ for every $s \geq 1$; however, for all dimensions $d \geq 3$ we will obtain strict inclusion $\mathsf{Brac}_{d,s}\supsetneq  \mathsf{Brac}_{d}$ for every $s > 1$. Let us start with an auxiliary lemma.

    \begin{lemma}\label{lem:exists-perm}
        Consider two real diagonal matrices $X=\diag(x_1,\cdots,x_s)$ and $Y=\diag(y_1,\cdots,y_s)$.
        The following conditions are equivalent:
        \begin{enumerate}
            \item[(i)] There exist $U,V\in \U{s}$  such that $X U Y =V$.
            \item[(ii)] There exists $\sigma\in\mathfrak S_s$  such that $|x_iy_{\sigma(i)}| = 1$ for every $i\in [s]$.
        \end{enumerate}
    \end{lemma}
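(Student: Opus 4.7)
The plan is to handle the two directions separately. For (ii) $\Rightarrow$ (i), I would take $U$ to be the permutation matrix $P_\sigma$ with $(P_\sigma)_{ij} = \delta_{j,\sigma(i)}$ and compute $V := XUY$ directly: its entries vanish outside the pattern $\{(i,\sigma(i))\}_i$, and the nonzero entries $x_i y_{\sigma(i)}$ lie in $\{\pm 1\}$ by hypothesis (the $x_i, y_j$ being real). Thus $V$ is a signed permutation matrix, hence orthogonal and in particular unitary, completing this direction.

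For (i) $\Rightarrow$ (ii), I would begin by noting that if some $x_i$ (resp.\ $y_j$) vanished, then the corresponding row (resp.\ column) of $V = XUY$ would vanish, contradicting $V \in \U{s}$. Hence $X$ and $Y$ are invertible. From $V^*V = I_s$ together with $X^* = X$ and $Y^* = Y$, I would derive the conjugation identity
$$X^2 U = U Y^{-2}.$$
Comparing $(i,j)$-entries on both sides yields $x_i^2 U_{ij} = y_j^{-2} U_{ij}$, so $U_{ij} \neq 0$ forces $x_i^2 y_j^2 = 1$, i.e., $|x_i y_j| = 1$. In other words, the support of $U$ is contained in the set $T := \{(i,j) : |x_i y_j| = 1\}$.

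To conclude, I would observe that the bistochastic matrix $W := (|U_{ij}|^2)_{i,j}$ is supported on $T$. The Birkhoff--von Neumann theorem writes $W$ as a convex combination of permutation matrices, each of which must itself be supported in $T$ (else $W$ would acquire a nonzero entry outside $T$). Any such permutation $\sigma \in \mathfrak S_s$ satisfies $(i,\sigma(i)) \in T$ for every $i$, giving $|x_i y_{\sigma(i)}| = 1$ as desired.

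The main step, and the one requiring the only real insight, is deriving $X^2 U = U Y^{-2}$ and reading off the support condition $|x_i y_j| = 1$ from its scalar form; the rest is bookkeeping. If one prefers to avoid Birkhoff--von Neumann, the combinatorial finish can be replaced by Hall's marriage theorem applied to the bipartite graph $T$: for any $S \subseteq [s]$, bistochasticity and the support condition yield $|S| = \sum_{i \in S} \sum_j W_{ij} \leq |N_T(S)|$, producing the required perfect matching.
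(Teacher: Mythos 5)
Your proof is correct, and its core coincides with the paper's: both directions hinge on ruling out zero diagonal entries and deriving the intertwining identity $X^2U=UY^{-2}$ from $V^*V=I_s$. Where you diverge is in how the permutation is extracted. The paper reads the identity spectrally: the columns of $U$ are $s$ linearly independent eigenvectors of $X^2$ with eigenvalues $y_j^{-2}$, so the multiset $\{x_i^2\}$ equals the multiset $\{y_j^{-2}\}$, and $\sigma$ is any bijection realizing this equality. You instead read the identity entrywise to get the support condition $U_{ij}\neq 0 \Rightarrow |x_iy_j|=1$, and then invoke Birkhoff--von Neumann (or Hall) on the doubly stochastic matrix $(|U_{ij}|^2)$ to produce a permutation inside that support. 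Both finishes are valid; the spectral route is shorter and stays inside linear algebra, while your combinatorial route makes the matching mechanism explicit and spells out a step the paper states rather tersely (that the eigenvalues of $X^2$ are \emph{exactly} the diagonal entries of $Y^{-2}$, with multiplicity). In the easy direction, the paper adjusts phases so that $XUY=I_s$, whereas you take the bare permutation matrix and accept a signed permutation as $V$; since $x_i,y_j$ are real this is the same construction up to signs, and either suffices because the statement only asks for some unitary $V$.
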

    \begin{proof} We shall prove the double implication.\\
        $(i) \Leftarrow (ii)$: Let $\sigma\in\mathfrak S_s $ be such that  $|x_iy_{\sigma(i)}|=1 $ for all $i \in [s]$. Taking $U$ to be the phase-permutation matrix corresponding to $\sigma$ and phases $\operatorname{arg}(x_i y_{\sigma(i)})$, we have $XUY=I_s$.
        \\
        $(i) \Rightarrow (ii)$: If $XUY=V$ for some unitary matrices $U$ and $V$, then the real diagonal matrices $X,Y$ are both invertible; furthermore, we have
        $$I_s=V^*V=Y^*U^*X^*XUY=YU^*X^2UY,$$
        which implies that
        $$X^2U=UY^{-2}.$$
        The latter equation means that the $s$ linearly independent column vectors of the unitary matrix $U$ form $s$ eigenvectors of $X^2$, and so the $s$ eigenvalues of $X^2$ are exactly the $s$ diagonal elements of $Y^{-2}$. Therefore, the set of $s$ diagonal elements of $X^2$ (counting multiplicities) is equal to the set of $s$ diagonal elements of $Y^{-2}$. Hence, there exists $\sigma\in\mathfrak S_s$ such that $x_i^2 = 1/y_{\sigma(i)}^2$ for all $i \in [s]$, which concludes the proof.
    \end{proof}

    \begin{proposition}
        For $d=2$ and all orders $s \geq 1$, we have
        $$\mathsf {Brac}_{2,s}=\mathsf {Brac}_{2}= \left\{ \big( (p, 1-p), (1-p,p) \big) \, : \, p\in[0,1]\right\}.$$
    \end{proposition}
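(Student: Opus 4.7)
The plan is to prove the chain of two equalities separately. For the second equality, I would start from the definition in \eqref{eq:def-bracelet-condition} applied with $d=2$: parameterizing $\alpha=(p,1-p)$ and $\beta=(q,1-q)$, the two conditions $\sqrt{\alpha_1\beta_1}\leq \sqrt{\alpha_2\beta_2}$ and $\sqrt{\alpha_2\beta_2}\leq \sqrt{\alpha_1\beta_1}$ collapse to $\alpha_1\beta_1=\alpha_2\beta_2$, i.e.~$pq=(1-p)(1-q)$, which simplifies to $p+q=1$. Hence $\mathsf{Brac}_2$ is exactly the anti-diagonal locus $\{((p,1-p),(1-p,p)):p\in[0,1]\}$.

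For the first equality, the inclusion $\mathsf{Brac}_2\subseteq \mathsf{Brac}_{2,s}$ is immediate from the corollary following Proposition \ref{prop:s-convexity-Bracds}. The nontrivial direction is $\mathsf{Brac}_{2,s}\subseteq \mathsf{Brac}_2$. The key observation I would use is specific to $d=2$: the $2s\times ds = 2s\times 2s$ block matrix
$$M=\begin{bmatrix} A_1 & A_2 \\ B_1 & B_2\end{bmatrix}$$
that appears in Definition \ref{def:generalized-bracelet-condition} is \emph{already square}, so the requirement that $M$ extend to a unitary of size $ds\times ds$ is equivalent to $M$ itself being unitary. In particular, the columns of $M$ are orthonormal.

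From column-orthonormality, for each $i\in\{1,2\}$ one obtains $A_i^*A_i + B_i^*B_i = I_s$. Taking the trace and using $\tfrac{1}{s}\|A_i\|_F^2=\alpha_i$, $\tfrac{1}{s}\|B_i\|_F^2=\beta_i$ yields $s\alpha_i + s\beta_i = s$, hence $\alpha_i+\beta_i=1$ for $i=1,2$. This forces $\beta=(1-\alpha_1,\alpha_1)$, putting $(\alpha,\beta)$ in the anti-diagonal set, which by the first step is precisely $\mathsf{Brac}_2$.

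There is no real obstacle here — the proof hinges entirely on the dimension-counting coincidence that, at $d=2$, the ``$2$-row block matrix'' of Definition \ref{def:generalized-bracelet-condition} is square and therefore unitary rather than merely a sub-block of a unitary. This is what makes $s>1$ give nothing new for $d=2$; the analogous argument breaks down as soon as $d\geq 3$, because then the block matrix has more columns than rows and column-orthonormality is no longer forced.
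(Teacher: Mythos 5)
Your proof is correct, and it takes a genuinely different and notably simpler route than the paper. The paper proves $\mathsf{Brac}_{2,s}\subseteq\mathsf{Brac}_2$ by diagonalizing $A_1A_1^*$ and $A_2A_2^*=I_s-A_1A_1^*$ simultaneously, writing singular value decompositions of all four blocks, invoking an auxiliary lemma (Lemma \ref{lem:exists-perm}) to extract a permutation $\sigma$ with $a_i=1-b_{\sigma(i)}$, and then handling degenerate blocks by a separate density/continuity argument. Your argument sidesteps all of this: the defining constraints $\sum_i A_iA_i^*=\sum_i B_iB_i^*=I_s$, $\sum_i A_iB_i^*=0_s$ say exactly that $MM^*=I_{2s}$ for the $2s\times 2s$ matrix $M=\bigl[\begin{smallmatrix}A_1 & A_2\\ B_1 & B_2\end{smallmatrix}\bigr]$, and since $M$ is square this forces $M^*M=I_{2s}$, whose diagonal blocks give $A_i^*A_i+B_i^*B_i=I_s$; taking traces yields $\alpha_i+\beta_i=1$ immediately, with no case distinction for singular blocks and no density argument. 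Your approach buys brevity and robustness (it treats degenerate and non-degenerate data uniformly), while the paper's computation yields finer structural information—namely that the singular value profiles of $A_1$ and $B_1$ are complementary up to a permutation—which your trace argument does not see but which is not needed for the statement. Your closing remark correctly identifies why the shortcut is special to $d=2$: for $d\geq 3$ the block matrix is rectangular and column-orthonormality is no longer automatic. The identification of $\mathsf{Brac}_2$ with the anti-diagonal locus and the inclusion $\mathsf{Brac}_2\subseteq\mathsf{Brac}_{2,s}$ via the corollary to Proposition \ref{prop:s-convexity-Bracds} are both handled correctly as well.
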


    \begin{proof}
        Fix $s \geq 2$ and let $\big((\alpha_1, 1-\alpha_1),  (\beta_1, 1-\beta_1)\big) \in \mathsf {Brac}_{2,s}$. It suffices to show that $\beta_1 = 1 - \alpha_1$.
        Let $A_1,A_2,B_1,B_2 \in \M{s}$ be such that
        $\tfrac 1s\Tr(A_1A_1^*) = \alpha_1$ and $\tfrac 1s\Tr(B_1B_1^*) = \beta_1$ as well as
        \begin{align*}
            A_1A_1^*+A_2A_2^*&=I_s\\
            B_1B_1^*+B_2B_2^*&=I_s\\
            A_1B_1^*+A_2B_2^*&=0_s.
        \end{align*}
        Since $A_1A_1^*$ and $A_2A_2^*$ commute, there exists $U\in \U{s}$ such that
        \begin{align*}
            A_1A_1^*&=U\diag(a_1,\ldots,a_s)U^*\\
            A_2A_2^*&=U\diag(1-a_1,\ldots,1-a_s)U^*
        \end{align*}
        for some real numbers $a_1, \ldots, a_s \in [0,1]$. Moreover, using the singular value decomposition, there exist $U_1, U_2 \in \U{s}$ such that
        \begin{align*}
            A_1 &= U \diag(\sqrt{a_1},\ldots,\sqrt{a_s})U_1^*\\
            A_2 &= U \diag(\sqrt{1-a_1},\ldots,\sqrt{1-a_s})U_2^*.
        \end{align*}
        Similarly, there exist $V, V_1,V_2 \in \U{s}$ such that
        \begin{align*}
            B_1 &= V \diag(\sqrt{\smash[b]{b_1}},\ldots,\sqrt{\smash[b]{b_s}}) V_1^*\\
            B_2 &= V \diag(\sqrt{\smash[b]{1-b_1}},\ldots,\sqrt{\smash[b]{1-b_s}}) V_2^*
        \end{align*}
        with $b_1, \ldots, b_s \in [0,1]$.
        Denoting $W_1:=U_1^*V_1$ and $W_2:=U_2^*V_2$,  we have $W_1, W_2 \in \U{s}$ and the condition  $A_1B_1^*+A_2B_2^*=0$ now reads
        \begin{align*}
            \diag(\sqrt{a_1},\ldots,\sqrt{a_s})W_1 \diag(&\sqrt{\smash[b]{b_1}},\ldots,\sqrt{\smash[b]{b_s}}) \\  =  &   -\diag(\sqrt{1-a_1},\ldots,\sqrt{1-a_s})W_2\diag(\sqrt{\smash[b]{1-b_1}},\ldots,\sqrt{\smash[b]{1-b_s}}).
        \end{align*}

        For non-degenerate $A_1,A_2,B_1,B_2$ (i.e., if none of $\alpha_i$'s or $\beta_i$'s equals $0$ or $1$), the condition  $A_1B_1^*+A_2B_2^*=0$   can be written as
        $$\diag\left(\sqrt{\frac{a_1}{1-a_1}},\ldots,\sqrt{\frac{a_s}{1-a_s}}\right)W_1 \diag\left(\sqrt{\frac{b_1}{1-b_1}},\ldots,\sqrt{\frac{b_s}{1-b_s}}\right)=-W_2.$$
        Using  \cref{lem:exists-perm}, there exists a permutation $\sigma\in \mathfrak S_s$  such that  $\frac{a_i}{1-a_i}\frac{b_{\sigma(i)}}{1-b_{\sigma(i)}}=1$, that is,  $a_i=1-b_{\sigma(i)}$, for every $i \in [s]$. Therefore,
        $$\alpha_1=\tfrac{1}{s}\Tr(A_1A_1^*)=\tfrac{1}{s}\sum_{i=1}^s a_i=\tfrac{1}{s}\sum_{i=1}^s (1-b_{\sigma(i)})=1-\beta_1;$$
        thus, for $p:=\alpha_1 \in [0,1]$ we have
        $$\begin{bmatrix}
            A_1 & A_2  \\
            B_1 & B_2
        \end{bmatrix}
        \xmapsto{\frac{1}{s}\| \cdot \|^2_F }
        \begin{bmatrix}
            p & 1-p  \\
            1-p  & p
        \end{bmatrix}.
        $$

        For general $A_1,A_2,B_1,B_2$, because the general linear group $\mathcal {GL}_s(\mathbb C)$ is dense in $\M{s}$ and the map $\frac{1}{s}\| \cdot \|^2_F$ is continuous,  we again obtain  $\alpha_1 = 1-\beta_1$, which finishes the proof.
    \end{proof}

    We now move on to the case $d=3$ and we show that $\mathsf{Brac}_{3,s}\supsetneq \mathsf {Brac}_{3,1} = \mathsf{Brac}_{3}$ for every $s \geq 2$. This fact will actually imply that
    $$\mathsf{Brac}_{d,s}\supsetneq \mathsf{Brac}_{d,1} = \mathsf{Brac}_{d} \ \text { for all }\ d \geq 3,\, s \geq 2$$ since for all $d,n,s$ we have
    $\mathsf{Brac}_{d+n,s} \cap \{ \alpha_{d+1} = \cdots = \alpha_{d+n} = \beta_{d+1} = \cdots = \beta_{d+n} = 0\} \cong \mathsf{Brac}_{d,s}.$

    Back to the claim about $d=3$, we shall focus on the slice
    \begin{align}
        \label{eq:slice-B3s}
        \mathsf {Brac}_{3,s}\cap \{\alpha_3=\beta_2=0\}=
        \big\{
        & \big( (\alpha_1, 1-\alpha_1, 0), (\beta_1, 0, 1-\beta_1) \big) \in \Delta_3^2 \, \colon  \exists A_1,A_2,B_1,B_3 \in \M{s} \text{ s.t.}
        \nonumber   \\[0.2em]
        &A_1A_1^*+A_2A_2^*=B_1B_1^*+B_3B_3^*=I_s,\, A_1B_1^*=0_s,
        \\[0.2em]
        \nonumber
        &\tfrac 1s\! \Tr(A_1A_1^*) = \alpha_1,\, \tfrac 1s \!\Tr(A_2A_2^*) = 1-\alpha_1,
        \\[0.2em] &
        \nonumber
        \tfrac 1s\! \Tr(B_1B_1^*) = \beta_1, \,\tfrac 1s \!\Tr(B_3B_3^*) = 1-\beta_1\,
        \big\}.
    \end{align}

    \begin{proposition}\label{prop:slice-es}
        For $d=3$ and every $s \geq 1$, we have
        $$\mathsf {Brac}_{3,s}\cap \{\alpha_3=\beta_2=0\}=\left\{ \big( (\alpha_1, 1-\alpha_1, 0), (\beta_1, 0, 1-\beta_1) \big) \in \Delta_3^2 \, : \, \lceil \alpha_1 s\rceil + \lceil \beta_1 s\rceil \leq s \right\}.
        $$
    \end{proposition}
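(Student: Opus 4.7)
The plan is to exploit the degeneracy of the slice. Since $\alpha_3 = \beta_2 = 0$, any choice of generating matrices must have $A_3 = 0_s$ and $B_2 = 0_s$, so the block-orthogonality relation $\sum_i A_i B_i^* = 0_s$ collapses to $A_1 B_1^* = 0_s$. Moreover, once $A_1, B_1 \in \M{s}$ have been chosen with $A_1 A_1^*, B_1 B_1^* \leq I_s$, the remaining constraints on $A_2$ and $B_3$ can always be satisfied by setting $A_2 := (I_s - A_1 A_1^*)^{1/2}$ and $B_3 := (I_s - B_1 B_1^*)^{1/2}$; the trace conditions on $A_2$ and $B_3$ then follow automatically from those on $A_1$ and $B_1$. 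The problem thus reduces to asking for which $(\alpha_1, \beta_1) \in [0,1]^2$ there exist $A_1, B_1 \in \M{s}$ with $A_1 A_1^*, B_1 B_1^* \leq I_s$, $\tfrac{1}{s}\Tr(A_1 A_1^*) = \alpha_1$, $\tfrac{1}{s}\Tr(B_1 B_1^*) = \beta_1$, and $A_1 B_1^* = 0_s$.

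For the $\subseteq$ inclusion, the argument is a rank count. The constraint $A_1 A_1^* \leq I_s$ forces the eigenvalues of $A_1 A_1^*$ into $[0,1]$; their sum is $s\alpha_1$, and at most $\rk(A_1)$ of them are positive, so $\rk(A_1) \geq \lceil s\alpha_1 \rceil$. Symmetrically, $\rk(B_1) \geq \lceil s\beta_1 \rceil$. The relation $A_1 B_1^* = 0_s$ means $\mathrm{range}(B_1^*) \subseteq \ker A_1 = \mathrm{range}(A_1^*)^\perp$, two subspaces of $\mathbb C^s$ whose orthogonality forces $\rk(A_1) + \rk(B_1) \leq s$. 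Chaining these two estimates yields $\lceil s\alpha_1\rceil + \lceil s\beta_1\rceil \leq s$.

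For the reverse inclusion, the plan is an explicit diagonal construction with disjoint supports. Setting $r_A := \lceil s\alpha_1\rceil$ and $r_B := \lceil s\beta_1\rceil$ with $r_A + r_B \leq s$, I take $A_1$ to be the diagonal matrix whose first $r_A - 1$ diagonal entries equal $1$, whose $r_A$-th entry equals $\sqrt{s\alpha_1 - (r_A - 1)} \in (0,1]$, and whose remaining entries are $0$; and $B_1$ the analogous diagonal matrix whose $r_B$ non-zero entries are placed in positions $r_A + 1, \ldots, r_A + r_B$. The disjoint supports make $A_1 B_1^* = 0_s$ automatic, the trace and norm-bound conditions hold by construction, and the complementary matrices $A_2, B_3$ are built as above, so $(\alpha, \beta) \in \mathsf{Brac}_{3,s}$. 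I do not expect a genuine obstacle: the only thing to verify is the correct reduction, after which the argument is elementary. The edge cases $\alpha_1 \in \{0,1\}$ or $\beta_1 \in \{0,1\}$ fit the recipe transparently (if $\alpha_1 = 0$ take $A_1 = 0$; if $\alpha_1 = 1$ then $r_A = s$ forces $r_B = 0$, consistent with $\beta_1 = 0$).
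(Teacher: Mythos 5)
Your proposal is correct and follows essentially the same route as the paper: the forward inclusion via the rank estimates $\rk(A_1)\geq\lceil s\alpha_1\rceil$, $\rk(B_1)\geq\lceil s\beta_1\rceil$ combined with $\rk(A_1)+\rk(B_1)\leq s$ from $A_1B_1^*=0_s$, and the reverse inclusion via a diagonal construction with disjoint supports. The only cosmetic difference is that you complete with $A_2=(I_s-A_1A_1^*)^{1/2}$ and $B_3=(I_s-B_1B_1^*)^{1/2}$ instead of writing the diagonal complements explicitly, which changes nothing essential.
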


    \begin{proof}
        Let $s \geq 1$. We shall prove the double inclusion.

        ``$\subseteq$'': Consider matrices $A_1,A_2, B_1, B_3$ as in Eq.~\eqref{eq:slice-B3s} and
        let  $\sqrt{a_1}\geq \ldots
        \geq \sqrt{a_s}$ be the singular values of $A_1$. The eigenvalues of $A_1A_1^*$ are therefore $a_1\geq \ldots \geq a_s \geq 0$, and $A_1A_1^* + A_2A_2^* = I_s$ guarantees that $a_1\leq 1$. Since $\rk(A_1)=\rk(A_1A_1^*)=\:\text{the number of nonzero } a_i$'s, it follows that $$\alpha_1 s = \Tr(A_1A_1^*)  = \sum_{i=1}^s a_i = \sum_{i=1}^{\rk(A_1)} a_i \leq \rk(A_1) a_1 \leq \rk(A_1).$$ In consequence,    $\lceil \alpha_1 s \rceil \leq \rk(A_1)$ and, similarly, $\lceil \beta_1 s \rceil \leq \rk(B_1)$. Finally, from    $A_1B_1^*=0$ we obtain
        $\rk B_1^* \leq \dim \ker A_1 = s - \rk A_1$, and so $\rk(A_1) + \rk(B_1) \leq s$, which proves the first inclusion.

        \smallskip

        ``$\supseteq$'':
        Conversely, given  $\alpha_1, \beta_1 \in [0,1]$ satisfying $\lceil \alpha_1 s\rceil + \lceil \beta_1 s\rceil \leq s$, let us consider
        \begin{align*}
            A_1&:=\diag\Big(\sqrt{a_1}, \ldots, \sqrt{\smash[b]{a_{\lceil \alpha_1s \rceil}}},0,\ldots,0\Big)\\
            A_2&:=\diag\Big(\sqrt{1-a_1}, \ldots, \sqrt{\smash[b]{1-a_{\lceil \alpha_1s \rceil}}},1,\ldots,1\Big)\\
            B_1&:=\diag\Big(0,\ldots,0,\sqrt{\smash[b]{b_1}},\ldots,\sqrt{\smash[b]{b_{\lceil \beta_1s \rceil}}}\Big)\\
            B_3&:=\diag\Big(1,\ldots,1,\sqrt{\smash[b]{1-b_1}},\ldots,\sqrt{\smash[b]{1-b_{\lceil \beta_1s \rceil}}}\Big),
        \end{align*}
        where
        $$a_1 = \cdots = a_{\lfloor \alpha_1 s \rfloor} = 1$$
        and, in the case when $\alpha_1 s$ is not an integer,
        $$a_{\lceil \alpha_1s \rceil} = a_{\lfloor \alpha_1 s \rfloor + 1} = \alpha_1 s - \lfloor \alpha_1 s \rfloor \in (0,1).$$
        The $b_i$'s are defined analogously, using $\beta_1 s$.
        The assumption $\lceil \alpha_1 s\rceil + \lceil \beta_1 s\rceil \leq s$ guarantees that the non-zero elements of $A_1$ and $B_1$ do not overlap, which implies that $A_1B_1^* = 0_s$. One can easily verify that all the other conditions from Eq.~\eqref{eq:slice-B3s} hold as well, and so the proof is finished.
    \end{proof}

    Let us consider
    $$E(s):=\left\{(\alpha_1, \beta_1) \in [0,1]^2 \colon \lceil \alpha_1 s\rceil + \lceil \beta_1 s\rceil \leq s \right\}.$$
    The sets $E(2), E(3), E(4), E(5)$ are displayed in Figure \ref{fig:Es}. Note that every $E(s)$ contains  the  axes:
    $$E(1) = \big\{ (\alpha_1, 0) \, : \, \alpha_1 \in [0,1]\big\} \cup \big\{(0,\beta_1) \, : \, \beta_1 \in [0,1]\big\} \subseteq E(s) \ \text { for every}\ s \geq 1.$$
    Moreover, we have
    $$
    \big\{ (\alpha_1,\beta_1) \, : \, \alpha_1+\beta_1\leq 1-  {2}/{s}\big\}\subseteq
    E(s)\subseteq
    \big\{{ (\alpha_1,\beta_1) \, : \, \alpha_1+\beta_1\leq 1\big\}} \ \text { for every}\ s \geq 1;
    $$
    hence,
    $$ \overline{\lim_{s \to \infty} E(s)} = \big\{ { (\alpha_1,\beta_1) \in [0,1]^2 \, : \, \alpha_1+\beta_1\leq 1\big\}}.$$
    This shows that, after taking the limit $s \to \infty$ and the closure, the bracelet conditions corresponding to the slice considered in \cref{eq:slice-B3s} become trivial; this result is in the spirit of \cref{thm:union-is-all-bistochastic}.

    \begin{figure}[htb]
        \centering
        \includegraphics[scale=.55]{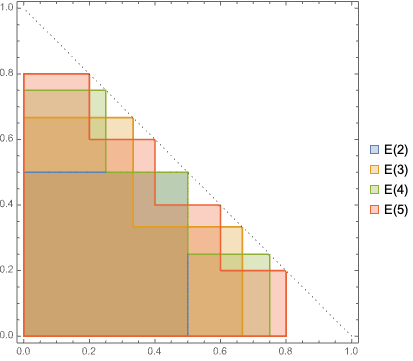}
        \caption{The (interior of the) sets $E(s)$ for $s=2,3,4,5$.}
        \label{fig:Es}
    \end{figure}

    \subsection{Generalized bracelet matrices}

    Similarly to Definition \ref{def:bracelet-matrix}, we introduce the set of generalized bracelet matrices.

    \begin{definition}\label{def:generalized-bracelet-matrix}
        A bistochastic matrix $B \in \mathsf B_d$ is called a \emph{generalized bracelet matrix} of order $s$ if all pairs of rows and all pairs of columns of $B$ satisfy the generalized bracelet condition of order $s$ from Eq.~\eqref{eq:def-generalized-bracelet-condition}. That is, the set of generalized bracelet matrices is defined as
        \begin{equation}\label{eq:def-generalized-bracelet-matrix}
            \mathsf L_{d,s}:=\big\{B\in \mathsf B_d : \,  \forall i_1\neq i_2 \, (B_{i_1 \cdot}, B_{i_2 \cdot}) \in \mathsf{Brac}_{d,s} \text{ and } \forall j_1\neq j_2 \, (B_{\cdot j_1}, B_{\cdot j_2}) \in \mathsf{Brac}_{d,s} \big\}.
        \end{equation}
    \end{definition}

    The following result is a generalization of \cref{prop:U-subseteq-L}:
    \begin{proposition}
        For all dimension $d \geq 2$ and all order $s \geq 1$ we have
        $$\mathsf U_{d,s} \subseteq \mathsf L_{d,s} \subseteq \mathsf B_d.$$
        In particular, the closure of the set of all generalized bracelet matrices is the full Birkhoff polytope:    $$\overline{\bigcup_{s\geq 1}\mathsf L_{d,s}}=\mathsf B_d.$$
    \end{proposition}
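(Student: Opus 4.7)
The second inclusion $\mathsf L_{d,s} \subseteq \mathsf B_d$ is immediate from \cref{def:generalized-bracelet-matrix}, since $\mathsf L_{d,s}$ is carved out of $\mathsf B_d$ by additional conditions. So the substantive content is the first inclusion $\mathsf U_{d,s} \subseteq \mathsf L_{d,s}$ together with the density statement.

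For the first inclusion, my plan is to mimic the proof of \cref{prop:U-subseteq-L}, but now reading the orthogonality relations block-wise instead of entry-wise. Let $B \in \mathsf U_{d,s}$ and pick $U \in \mathcal U(ds)$ with $B_{ij} = \frac{1}{s}\|U_{ij}\|_F^2$. Fix two row indices $i_1 \neq i_2$ and define $A_j := U_{i_1,j}$ and $C_j := U_{i_2,j}$ for $j \in [d]$. The unitarity of $U$ applied to the block rows indexed by $i_1$ and $i_2$ reads
$$\sum_{j=1}^d A_j A_j^* = \sum_{j=1}^d C_j C_j^* = I_s, \qquad \sum_{j=1}^d A_j C_j^* = 0_s,$$
which are exactly the defining conditions of \cref{def:generalized-bracelet-condition} witnessing $(B_{i_1,\cdot}, B_{i_2,\cdot}) \in \mathsf{Brac}_{d,s}$. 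The same argument applied to block columns of $U$ (equivalently, using the unitarity $U^*U = I$) handles the column pairs, giving $B \in \mathsf L_{d,s}$.

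For the density statement, the key observation is that the inclusions
$$\mathsf U_{d,s} \subseteq \mathsf L_{d,s} \subseteq \mathsf B_d$$
pass to unions and closures in the obvious way. Combining this with \cref{thm:union-is-all-bistochastic}, I obtain
$$\mathsf B_d \;=\; \overline{\bigcup_{s \geq 1} \mathsf U_{d,s}} \;\subseteq\; \overline{\bigcup_{s \geq 1} \mathsf L_{d,s}} \;\subseteq\; \overline{\mathsf B_d} \;=\; \mathsf B_d,$$
where the last equality uses that $\mathsf B_d$ is closed (it is the compact Birkhoff polytope). Squeezing yields the claimed equality.

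There is no real obstacle here: the proof is a bookkeeping exercise that repackages the block structure of a unitary matrix into the witnesses required by the generalized bracelet condition, followed by a one-line sandwich argument. The only small point to be careful about is verifying that the column pairs of $B$ also satisfy $\mathsf{Brac}_{d,s}$ — this is handled by applying the same block-orthogonality argument to $U^*$ (whose blocks are $(U^*)_{ij} = (U_{ji})^*$, and $\|(U_{ji})^*\|_F = \|U_{ji}\|_F$, so the associated bistochastic matrix is the transpose of $B$).
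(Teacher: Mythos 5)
Your proposal is correct and follows essentially the same route as the paper's proof: the blocks of the two relevant block-rows of $U$ (and of $U^*$ for the column pairs) serve directly as the witnesses in \cref{def:generalized-bracelet-condition}, and the density claim is obtained by sandwiching $\overline{\bigcup_s \mathsf L_{d,s}}$ between $\overline{\bigcup_s \mathsf U_{d,s}} = \mathsf B_d$ (from \cref{thm:union-is-all-bistochastic}) and the closed polytope $\mathsf B_d$. You merely spell out the block-orthogonality and the transpose argument for columns, which the paper leaves as a one-line verification.
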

    \begin{proof}
        The only fact that needs checking is
        $\mathsf U_{d,s} \subseteq \mathsf L_{d,s}$, as then the claim about $\bigcup_{s\geq 1}\mathsf L_{d,s}$  will follow from \cref{thm:union-is-all-bistochastic}.
        Fix $B \in \mathsf U_{d,s}$ and let $U \in \mathcal U(ds)$ be the corresponding unitary matrix, i.e., $\frac 1s \operatorname{Tr}(U_{ij}U_{ij}^*) = B_{ij}$ for all $i,j \in [d]$. Using the unitarity of $U$, one easily verifies that the pair of different rows $(B_{i_1\cdot}, B_{i_2\cdot})$   satisfies the generalized bracelet condition $\mathsf{Brac}_{d,s}$ with matrices $U_{i_11}, \ldots, U_{i_1d}; U_{i_21}, \ldots, U_{i_2d} \in \M{s}$. Analogously for columns; hence $B \in \mathsf L_{d,s}$, as claimed.
    \end{proof}

    We show next that the (non-convex) sets of generalized bracelet matrices $\mathsf L_{d,s}$ have a very intricate inclusion structure. To this end, we study the intersections of $\mathsf L_{d,s}$ and of $\mathsf U_{d,s}$ with segments connecting two extremal points of the Birkhoff polytope.

    \begin{proposition}\label{intersection}
        Let $d \geq 3$ and consider permutations $\pi, \sigma \in \mathfrak S_d$ such that $\pi^{-1}\sigma$ has a $p$-cycle for some $p \geq 3$. Then the intersection of the segment $[\pi, \sigma]$ with  $\mathsf U_{d,s}$ is equal to the intersection of  $[\pi, \sigma]$ with  $\mathsf L_{d,s}$,  and coincides with the discrete set  of convex mixtures of $\pi$ and $\sigma$ with rational weights with denominator $s$:
        $$\{\lambda \in [0,1] \, : \, (1-\lambda) \pi + \lambda \sigma \in \mathsf U_{d,s}\} = \{\lambda \in [0,1] \, : \, (1-\lambda) \pi + \lambda \sigma \in \mathsf L_{d,s}\} = \{{k}/{s}\colon k = 0, \ldots, s\}.$$
        In particular, this holds for the $1$-faces (i.e.~edges) of the Birkhoff polytope $\mathsf B_d$ (for which $\pi^{-1}\sigma$ is a $d$-cycle).
    \end{proposition}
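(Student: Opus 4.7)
The plan is to prove a double inclusion. For the ``$\supseteq$'' direction, I fix $\lambda = k/s$ with $k\in\{0,1,\ldots,s\}$. In the interior case $1\leq k\leq s-1$, by Proposition \ref{prop:Ud_in_Uds} we have $\pi\in \mathsf U_d\subseteq \mathsf U_{d,s-k}$ and $\sigma\in \mathsf U_d\subseteq \mathsf U_{d,k}$, so the convexity property (Proposition \ref{prop:s-convexity-Uds} / Corollary \ref{Corollary}) gives
$$\tfrac{s-k}{s}\pi+\tfrac{k}{s}\sigma\in \tfrac{s-k}{s}\mathsf U_{d,s-k}+\tfrac{k}{s}\mathsf U_{d,k}\subseteq \mathsf U_{d,s}\subseteq \mathsf L_{d,s}.$$
The boundary cases $k=0$ and $k=s$ are immediate from $\pi,\sigma\in \mathsf U_d\subseteq \mathsf U_{d,s}$.

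For the ``$\subseteq$'' direction, since $\mathsf U_{d,s}\subseteq \mathsf L_{d,s}$, it suffices to prove that $M_\lambda:=(1-\lambda)\pi+\lambda\sigma\in \mathsf L_{d,s}$ forces $\lambda s\in\mathbb Z$. Multiplying on the left by $P_{\pi^{-1}}$ permutes rows of $M_\lambda$ and therefore preserves membership in $\mathsf L_{d,s}$; after an additional relabeling of the indices, I may assume $\pi=\mathrm{id}$ and that the cycle decomposition of $\sigma$ contains the $p$-cycle $(1\,2\,\cdots\,p)$.

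I then focus on rows $1$ and $2$ of $M_\lambda=(1-\lambda)I+\lambda P_\sigma$: row $1$ has value $1-\lambda$ in column $1$ and value $\lambda$ in column $p$, while row $2$ has value $\lambda$ in column $1$ and value $1-\lambda$ in column $2$. Because $p\geq 3$, the columns $1,2,p$ are distinct, so the joint support has size $3$ with a unique overlap at column $1$. The generalized bracelet condition for this pair $(\alpha,\beta)\in\Delta_d^2$ forces the blocks $A_j,B_j$ corresponding to indices outside the joint support to vanish (since $\alpha_j=\tfrac{1}{s}\|A_j\|_F^2=0$ implies $A_j=0$), and therefore it reduces to the generalized bracelet condition on the restricted vectors in $\Delta_3$. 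Permuting the three active columns so that the shared column comes first, the pair becomes exactly
$$\bigl((1-\lambda,\lambda,0),\,(\lambda,0,1-\lambda)\bigr),$$
which lies in the slice $\mathsf{Brac}_{3,s}\cap\{\alpha_3=\beta_2=0\}$ studied in Proposition \ref{prop:slice-es}. That proposition thus yields the arithmetic condition
$$\lceil(1-\lambda)s\rceil+\lceil\lambda s\rceil\leq s.$$
A short check shows this forces $\lambda s\in\mathbb Z$: writing $x=\lambda s\in[0,s]$, if $x\notin\mathbb Z$ then $\lceil s-x\rceil=s-\lfloor x\rfloor$ and $\lceil x\rceil=\lfloor x\rfloor+1$, giving sum $s+1>s$, a contradiction.

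The main obstacle is the \emph{support-reduction step}: verifying rigorously that the generalized bracelet condition on $(\alpha,\beta)\in\Delta_d^2$ depends only on the restriction of $(\alpha,\beta)$ to the union of their supports. This is conceptually a one-line observation ($\|A\|_F=0\iff A=0$), but it is what licenses importing the three-dimensional analysis of Proposition \ref{prop:slice-es} into an ambient dimension of arbitrary size, localizing the entire argument to the chosen $p$-cycle. The hypothesis $p\geq 3$ enters exactly here: it is what guarantees that two consecutive rows of $M_\lambda$ restricted to the cycle have three distinct active columns, since for $p=2$ the restriction would only have two active columns and the corresponding bracelet slice would become trivial, yielding no constraint on $\lambda$.
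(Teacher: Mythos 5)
Your proof is correct and follows essentially the same route as the paper: reduce WLOG to $\pi=\mathrm{id}$ with the cycle $(1\,2\,\cdots\,p)$, restrict two rows to their (three-element, thanks to $p\geq 3$) joint support to land in the slice of Proposition~\ref{prop:slice-es}, deduce $\lceil\lambda s\rceil+\lceil(1-\lambda)s\rceil\leq s$, and get the converse inclusion from Corollary~\ref{Corollary} and unistochasticity of permutation matrices. The only (immaterial) difference is that you invoke the statement of Proposition~\ref{prop:slice-es} after an explicit support-reduction observation, whereas the paper re-runs the rank argument from its proof directly on the blocks $A_1,A_2,B_1,B_3$.
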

    \begin{proof} The inclusion of the first set in the second one is trivial.
        We shall prove that the second set is contained in the third, and then that the third set is contained in the first.

        For the first inclusion, without loss of generality  we may assume that the decomposition of $\pi^{-1}\sigma$ contains the cycle $(1,2,\ldots,p)$ for some $p \geq 3$. Furthermore, we may also assume that $\pi$ is the identity permutation; hence, $(1-\lambda) \pi + \lambda \sigma$ has the form:
        $$
        \begin{bmatrix}
            1-\lambda & \lambda  & 0 & \cdots & 0 & 0\\
            0 & 1- \lambda  & \lambda & \cdots & 0 & 0\\
            \vdots & \vdots & \vdots & \ddots & \vdots & \vdots\\
            0 & 0 & 0 & \cdots & 1-\lambda & \lambda\\
            \lambda & 0 & 0 & \cdots & 0 & 1-\lambda
        \end{bmatrix}_{p \times p}
        \bigoplus\  \Big[ \text{another matrix of size } (d-p) \times (d-p)\Big].
        $$
        Considering  any two rows of the $p \times p$ submatrix, we obtain, after permuting the columns, the slice analysed in Prop. \ref{prop:slice-es}:
        if $(1-\lambda) \pi + \lambda \sigma \in \mathsf L_{d,s}$, there exist $A_1, A_2, B_1, B_3\in \M{s}$ such that
        $$\begin{bmatrix}
            A_1 & A_2 & 0 & 0 & \cdots \\
            B_1 & 0 & B_3 & 0 & \cdots
        \end{bmatrix}_{2s \times ps}
        \xmapsto{\frac{1}{s}\| \cdot \|^2_F }
        \begin{bmatrix}
            \lambda & 1-\lambda & 0 & 0 & \cdots \\
            1-\lambda & 0 & \lambda & 0 & \cdots
        \end{bmatrix}_{2 \times p}.
        $$

        As in the  ``$\subseteq$'' part of the proof of Prop. \ref{prop:slice-es}, we deduce that
        $\rk(A_1)\geq  \lceil \lambda s \rceil$ and $\rk(B_1)\geq \lceil (1-\lambda) s \rceil$, and then
        $\lceil \lambda s\rceil + \lceil (1-\lambda) s\rceil \leq s $; thus, $\lambda = {k}/{s}$ for some $k\in \{0,1,\ldots, s\}$, proving the first claim.

        As for the second inclusion, use Corollary \ref{Corollary}, together with the fact that permutation matrices are unistochastic, to conclude that
        $$ \frac{s-k}{s}\pi+\frac{k}{s}\sigma\in \mathsf U_{d,s}.$$

        Finally, the claim about the face structure of the Birkhoff polytope can be found in, e.g., \cite[Theorem 2.2]{brualdi1977convex}.
    \end{proof}

    \begin{corollary}\label{cor:not-increasing-in-s}
        The family of sets $\mathsf U_{d,s}$ is not increasing in $s$.
    \end{corollary}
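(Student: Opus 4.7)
The plan is to invoke Proposition \ref{intersection} in a very direct way: it gives a complete characterization of which convex combinations of two suitably chosen permutation matrices belong to $\mathsf U_{d,s}$, and from this rigid description one immediately reads off obstructions to monotonicity in $s$.

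Concretely, fix $d \geq 3$ and choose $\pi = \mathrm{id}$ and $\sigma$ to be the cyclic permutation $(1\,2\,\cdots\,d)$, so that $\pi^{-1}\sigma$ is itself a $d$-cycle with $d \geq 3$; Proposition \ref{intersection} then applies to the segment $[\pi,\sigma]$. The proposition tells us that
$$\{\lambda \in [0,1] \, : \, (1-\lambda)\pi + \lambda\sigma \in \mathsf U_{d,s}\} = \{k/s \, : \, k = 0,1,\ldots,s\}.$$
I would now consider the midpoint $B := \tfrac{1}{2}\pi + \tfrac{1}{2}\sigma$, which corresponds to $\lambda = 1/2$. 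Taking $s=2$ in the proposition gives $1/2 \in \{0,1/2,1\}$, so $B \in \mathsf U_{d,2}$. Taking instead $s=3$, the admissible values are $\{0,1/3,2/3,1\}$, and since $1/2$ does not lie in this set, $B \notin \mathsf U_{d,3}$. This exhibits $\mathsf U_{d,2} \not\subseteq \mathsf U_{d,3}$ and therefore proves that the family $(\mathsf U_{d,s})_{s\geq 1}$ is not monotone with respect to $s$.

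If desired, the same argument generalizes: whenever $s \nmid t$, one can pick any integer $0 < k < s$ with $k/s \notin \{0/t,1/t,\ldots,t/t\}$ (e.g.\ $k=1$) and the matrix $(1-k/s)\pi + (k/s)\sigma$ witnesses $\mathsf U_{d,s} \not\subseteq \mathsf U_{d,t}$. This complements Corollary \ref{Corollary}(2), which gives the only non-trivial monotonicity relation $\mathsf U_{d,s} \subseteq \mathsf U_{d,ns}$. There is no real obstacle here—everything is a direct consequence of the rigidity statement already proved in Proposition \ref{intersection}; the only care to take is in choosing $(\pi,\sigma)$ so that the cycle-length hypothesis of that proposition is satisfied, which is why we need $d \geq 3$.
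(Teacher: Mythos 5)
Your proof is correct and follows essentially the same route as the paper: the corollary is intended as a direct consequence of Proposition \ref{intersection}, and the paper's own discussion right after it uses precisely the segment from the identity to a cycle (e.g.\ $[(1)(2)(3),(123)]$ for $d=3$), with the midpoint $\lambda=1/2$ lying in $\mathsf U_{d,2}$ but not in $\mathsf U_{d,3}$ since $1/2 \notin \{0,1/3,2/3,1\}$. Your added remark that $\mathsf U_{d,s}\not\subseteq\mathsf U_{d,t}$ whenever $s \nmid t$ (for $d\geq 3$) is a correct and natural strengthening, consistent with Corollary \ref{Corollary}(2).
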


    Note that the result above was the motivation behind the study of the slice of the bracelet condition set considered in Eq.~\eqref{eq:slice-B3s}. Indeed, if we consider the segment $[(1)(2)(3), (123)]$ connecting the identity and the full cycle permutations of $\mathfrak S_3$, we have
    $$(1-\lambda) \cdot (1)(2)(3) + \lambda \cdot (123) =
    \begin{bmatrix}
        1-\lambda & \lambda & 0 \\
        0 & 1-\lambda & \lambda\\
        \lambda & 0 & 1-\lambda
    \end{bmatrix}.
    $$
    We see that any two rows (or columns) of the matrix above have the zero pattern found in Eq.~\eqref{eq:slice-B3s}.

    As a final remark, we show that the  blue line in \cref{fig:triangle-star-dot}, i.e.~the set of all bistochastic matrices of the form
    $$B = \begin{bmatrix}
        x & y & y \\
        y & x & y \\
        y & y & x
    \end{bmatrix},$$ is a subset of $\mathsf U_{3,2}$.
    To this end, we consider a variable $q \in [0,1]$ and  define
    $$w_\pm = \tfrac 1 2 \big( 1-q \pm \sqrt{1+2q-3q^2}\,\big),$$
    and
    $$U_q := \begin{bNiceMatrix}[c,margin]
        \CodeBefore
        \rectanglecolor{blue!10}{1-1}{2-2}
        \rectanglecolor{red!10}{1-3}{2-4}
        \rectanglecolor{blue!10}{1-5}{2-6}
        \rectanglecolor{red!10}{3-1}{4-2}
        \rectanglecolor{blue!10}{3-3}{4-4}
        \rectanglecolor{red!10}{3-5}{4-6}
        \rectanglecolor{blue!10}{5-1}{6-2}
        \rectanglecolor{red!10}{5-3}{6-4}
        \rectanglecolor{blue!10}{5-5}{6-6}
        \Body
        q & 0 & w_- & 0 & w_+ & 0\\
        0 & q & 0 & w_+ & 0 & w_-\\
        w_+ & 0 & q & 0 & w_- & 0\\
        0 & w_- & 0 & q & 0 & w_+\\
        w_- & 0 & w_+ & 0 & q & 0\\
        0 & w_+ & 0 & w_- & 0 & q
    \end{bNiceMatrix} \stackrel{\phi_{3,2}}{ \longmapsto }
    \begin{bNiceMatrix}[c,margin]
        \CodeBefore
        \chessboardcolors{blue!10}{red!10}
        \Body
        q^2 & \frac{1}{2}(1-q^2) & \frac{1}{2}(1-q^2)\\[0.33em]
        \frac{1}{2}(1-q^2) & q^2 &\frac{1}{2}(1-q^2)\\[0.33em]
        \frac{1}{2}(1-q^2) & \frac{1}{2}(1-q^2) & q^2
    \end{bNiceMatrix} \in \mathsf U_{3,2}.$$
    One can easily check by direct computation that  the matrix $U_q$   is unitary (actually orthogonal, see next section) and that the associated bistochastic matrices fill in the blue line in \cref{fig:triangle-star-dot}. It is interesting to notice that $U_q$ is equal to a direct sum $U_q= U_{135} \oplus U_{246}$ of two circulant matrices acting on odd, resp.~even, indices. Note that the blue point at the bottom end of the blue line (i.e.,~the matrix $\frac 12 (P_{(132)} + P_{(123)})$), corresponding to $q=0$, has also been discussed in \cref{ex:generalized-but-not-unistochastic}, where a different $6 \times 6$ orthogonal matrix has been used to show that it is 2-unistochastic.

    Finally, consider the red point in \cref{fig:triangle-star-dot}, corresponding to the convex combination, with weights 2/3 and 1/3, respectively, of the blue and green points. It corresponds to the bistochastic matrix
    $$B = \frac{2}{3} \begin{bmatrix}
        0 & 1/2 & 1/2 \\
        1/2 & 0 & 1/2 \\
        1/2 & 1/2 & 0
    \end{bmatrix} + \frac{1}{3}\begin{bmatrix}
        1/9 & 4/9 & 4/9 \\
        4/9 & 1/9 & 4/9 \\
        4/9 & 4/9 & 1/9
    \end{bmatrix} = \begin{bmatrix}
        1/27 & 13/27 & 13/27 \\
        13/27 & 1/27 & 13/27 \\
        13/27 & 13/27 & 1/27
    \end{bmatrix}
    $$
    Since the green point lies on the hypocycloid curve: $\sqrt{4/9 \cdot 4/9} = 2 \sqrt{1/9 \cdot 4/9}$,  it corresponds to a unistochastic matrix. The blue point corresponds, as shown above, to a 2-unistochastic matrix. Hence, in virtue of \cref{prop:s-convexity-Uds}, $B$ is 3-unistochastic.
    This disproves a bistochastic version of  \cite[Conj.~7.1]{shahbeigi2021log}, which suggested that the set of 3-unistochastic matrices restricted to the simplex in \cref{fig:triangle-star-dot} coincides with the union of the region delimited by the hypocycloid and the yellow  Star of David shape generated by the two triangles.

    \begin{figure}
        \centering
        \includegraphics[width=0.5\textwidth]{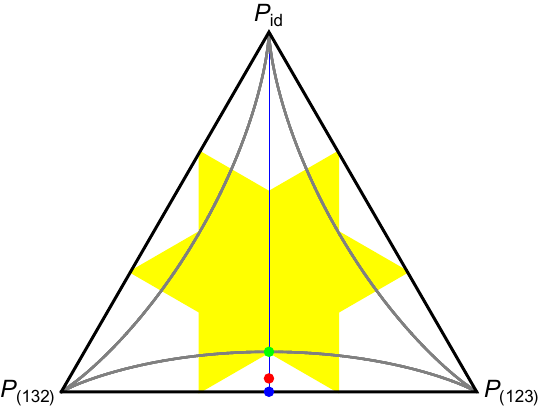}
        \caption{A slice through the Birkhoff polytope $\mathsf B_3$  corresponding to the simplex generated by the permutation matrices $P_{\mathrm{id}}$, $P_{(123)}$, and $P_{(132)}$; cf.  Figure \ref{fig:random-U3s} and \cite[Figure 7(b)]{shahbeigi2021log}.}
        \label{fig:triangle-star-dot}
    \end{figure}

    \section{Generalized orthostochastic matrices}\label{sec:orthostochastic}

    Much of the theory developed in the previous sections for generalized unistochastic matrices can be carried out to the case of \emph{generalized orthostochastic} matrices, which is what we do in this section. However, since many things are very similar, we shall only present the main definitions and some observations. We leave the detailed study of generalized orthostochastic matrices (and that of their quaternionic counterpart, the \emph{qustochastic} matrices) to future work.

    Recall that the function $\phi_{d,s}$ from \cref{def:generalized-unistochastic} maps a $d \times d$ block-matrix (with blocks of size $s \times s$) to the matrix of normalized squares of Frobenius norms of the blocks. We denote by $\mathcal O(n)$ the group of $n \times n$ orthogonal matrices.

    \begin{definition}
        We define
        $$\mathsf O_{d,s}:=\phi_{d,s}(\mathcal O(ds))$$ to be the set of \emph{generalized orthostochastic matrices}.
    \end{definition}

    As in the complex case, for $s=1$ we recover the usual orthostochastic matrices, which have received, along with the unistochastic matrices, a lot of attention in the literature \cite{au1979orthostochastic,au1991permutation,bengtsson2005birkhoff,zyczkowski2003random,chterental2008orthostochastic}. Clearly, $\mathsf O_{d,s}\subseteq\mathsf U_{d,s}$, with the inclusion being strict for $d \geq 3$.

    Importantly, the van der Waerden matrix $J_d/d$ is orthostochastic if and only if there exists a~real Hadamard matrix of order $d$ \cite{hadamard1893resolution,hedayat1978hadamard,kharaghani2005hadamard}. This can only happen if $d=2$ or if $d$ is a multiple of four, and it has long been conjectured that these conditions are also sufficient. In particular, the distance between $J_3/3$ and the set $\mathsf O_{d,1}$ is equal to $\sqrt 2/3$ \cite[Proposition 3.2]{chterental2008orthostochastic}.

    We prove now the main result of this section.

    \begin{proposition}
        For all dimensions $d \geq 2$ and all orders $s \geq 1$,   we have  $$\mathsf U_{d,s}\subseteq\mathsf O_{d,2s}.$$
    \end{proposition}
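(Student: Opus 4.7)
The plan is to use the standard realification of complex matrices. For $A \in \M{n}$ with real and imaginary parts $X=\operatorname{Re}(A)$ and $Y=\operatorname{Im}(A)$, set
$$R(A) := \begin{bmatrix} X & -Y \\ Y & X \end{bmatrix} \in \Mreal{2n}.$$
Three elementary facts about $R$ do all the work: it is a ring homomorphism $\M{n} \to \Mreal{2n}$, it satisfies $R(A^*) = R(A)^T$ (so in particular $R$ sends $\U{n}$ into $\mathcal O(2n)$), and it doubles the squared Frobenius norm, $\|R(A)\|_F^2 = 2\|A\|_F^2$.

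Given $B \in \mathsf U_{d,s}$ represented by a unitary $U \in \U{ds}$ with blocks $U_{ij} \in \M{s}$, I would assemble a real matrix $\tilde U \in \Mreal{2ds}$ block-wise, declaring $\tilde U$ to be the $d \times d$ array of $2s \times 2s$ blocks whose $(i,j)$-block is $R(U_{ij})$. Then
$$\tfrac{1}{2s}\|\tilde U_{ij}\|_F^2 = \tfrac{1}{2s}\cdot 2\|U_{ij}\|_F^2 = \tfrac{1}{s}\|U_{ij}\|_F^2 = B_{ij},$$
so once I verify $\tilde U \in \mathcal O(2ds)$, the proof is done and we conclude $B = \phi_{d,2s}(\tilde U) \in \mathsf O_{d,2s}$.

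For orthogonality, using that $R$ is a homomorphism with $R(A^*)=R(A)^T$, the $(i,j)$-block of $\tilde U \tilde U^T$ reads
$$\sum_{k=1}^d R(U_{ik}) R(U_{jk})^T = \sum_{k=1}^d R(U_{ik}) R(U_{jk}^*) = R\Bigl(\sum_{k=1}^d U_{ik} U_{jk}^*\Bigr) = R(\delta_{ij} I_s) = \delta_{ij} I_{2s},$$
where the second-to-last equality uses $UU^* = I_{ds}$ (whose $(i,j)$-block is precisely $\sum_k U_{ik}U_{jk}^*$), and $R(I_s) = I_{2s}$ since $I_s$ is real; an analogous computation yields $\tilde U^T \tilde U = I_{2ds}$. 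I do not anticipate any serious obstacle beyond bookkeeping: the only conceptual content is that the realification $R$ respects the algebraic structure block-wise, which is exactly what makes the assembled block matrix orthogonal. The constant $2$ in the order $2s$ is forced by the formula $\|R(A)\|_F^2 = 2\|A\|_F^2$, and the block-by-block construction (as opposed to realifying $U$ globally and then permuting) is what ensures the result lands in $\mathsf O_{d,2s}$ rather than in some permutation of it.
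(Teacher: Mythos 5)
Your proof is correct and rests on the same idea as the paper's: the realification embedding $\mathcal U(n)\hookrightarrow\mathcal O(2n)$, which doubles the squared Frobenius norm and hence turns an $s$-unistochastic representation into a $2s$-orthostochastic one. The only cosmetic difference is that the paper realifies entry by entry (each $z$ becomes a $2\times 2$ block) while you realify each $s\times s$ block as a whole; the two constructions differ by a permutation acting inside each $2s\times 2s$ block, which leaves the block Frobenius norms unchanged.
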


    \begin{proof}
        The result follows from the standard embedding $\mathcal U(n) \subseteq \mathcal O(2n)$, obtained by replacing a~complex entry $z_{ij}$ by the $2 \times 2$ block $\big[ \begin{smallmatrix}\operatorname{Re} z_{ij} & \operatorname{Im} z_{ij} \\
            -\operatorname{Im} z_{ij} & \operatorname{Re} z_{ij} \end{smallmatrix}\big]$.
    \end{proof}

    \begin{corollary}
        For all $d \notin \{2\} \cup 4\mathbb N$, we have
        $$J_d/d \in \mathsf O_{d,2} \setminus \mathsf O_{d,1}.$$
    \end{corollary}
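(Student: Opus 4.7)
The plan is to decompose the corollary into its two set-membership assertions and dispatch each one using results already available in the paper.

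First, to show $J_d/d \in \mathsf O_{d,2}$, I would note that the van der Waerden matrix is always unistochastic: it is the image under $\phi_d$ of the Fourier matrix $\frac{1}{\sqrt d}(\exp(\tfrac{2\pi \mathrm i}{d} jk))_{j,k=0}^{d-1}$, which is pointed out in the introduction. Hence $J_d/d \in \mathsf U_d = \mathsf U_{d,1}$. Applying the immediately preceding proposition with $s=1$ gives $\mathsf U_{d,1} \subseteq \mathsf O_{d,2}$, and so $J_d/d \in \mathsf O_{d,2}$.

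Second, to show $J_d/d \notin \mathsf O_{d,1}$, I would quote the classical fact recalled just before the proposition: $J_d/d$ lies in $\mathsf O_{d,1}$ (i.e.\ is orthostochastic in the usual sense) if and only if there exists a real Hadamard matrix of order $d$. A real Hadamard matrix of order $d$ forces $d \in \{1,2\} \cup 4\mathbb N$ (by the elementary row-orthogonality argument of Hadamard), so under the hypothesis $d \notin \{2\} \cup 4\mathbb N$ (and $d \geq 2$, as is standing in the paper, ruling out $d=1$), no such Hadamard matrix exists, and therefore $J_d/d \notin \mathsf O_{d,1}$.

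Combining the two inclusions yields $J_d/d \in \mathsf O_{d,2}\setminus \mathsf O_{d,1}$, as desired. There is no real obstacle here: both assertions are immediate once we invoke the Fourier matrix for the unistochasticity of $J_d/d$, the embedding $\mathcal U(n)\subseteq \mathcal O(2n)$ from the preceding proposition, and the Hadamard-order obstruction. The content of the corollary is essentially the observation that the Hadamard obstruction to orthostochasticity of $J_d/d$ disappears as soon as one allows block-size $s=2$.
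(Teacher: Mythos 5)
Your proof is correct and is essentially the paper's own argument: $J_d/d \in \mathsf U_{d,1} \subseteq \mathsf O_{d,2}$ via the preceding proposition, and $J_d/d \notin \mathsf O_{d,1}$ because no real Hadamard matrix of order $d$ exists for $d \notin \{2\} \cup 4\mathbb N$. Your only addition is spelling out the Fourier-matrix witness for $J_d/d \in \mathsf U_{d,1}$, which the paper leaves implicit.
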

    \begin{proof}
        We have $J_d/d \in \mathsf U_{d,1} \subseteq \mathsf O_{d,2}$. On the other hand, since there cannot exist a real Hadamard matrix of order $d$, we have $J_d/d \notin \mathsf O_{d,1}$, as claimed.
    \end{proof}

    \section{Random generalized unistochastic matrices}\label{sec:random}

    In the previous sections we have introduced and discussed generalized unistochastic matrices, which form the set
    $$\mathsf U_{d,s} = \phi_{d,s}(\U{ds}),$$
    where $\phi_{d,s}$ is the map from Eq.~\eqref{eq:def-phi-d-s}. As the unitary group $\U{ds}$ comes equipped with the (normalized) Haar measure $\mathfrak h_{ds}$, it is natural to introduce and examine its image measure via~$\phi_{d,s}$.

    \begin{definition}\label{def:mu-ds}
        We endow the set of generalized unistochastic matrices $\mathsf U_{d,s}$ with the probability measure
        \begin{equation*}
            \mu_{d,s} = (\phi_{d,s})_\# \mathfrak h_{ds},
        \end{equation*}
        that is, the image measure of the normalized Haar distribution $\mathfrak h_{ds}$ on $\U{ds}$ through the map $\phi_{d,s}$. In other words, if $U \in \U{ds}$ is Haar-distributed, then $B:=\phi_{d,s}(U) \in \mathsf U_{d,s}$ is $\mu_{d,s}$-distributed.
    \end{definition}

    We recall the following result about the first few joint moments of the entries of a Haar-distributed random unitary matrix.

    \begin{lemma}[{{\cite[Proposition 4.2.3]{hiai2000semicircle}}}]\label{lem:moments-unitary}
        Let $U=(U_{ij})_{ i,j \in [n]} \in \U{n}$ be Haar-distributed. We have
        \begin{align*}
            \mathbb{E}\Big[|U_{ij}|^2\Big]&=\frac{1}{n}\qquad (1\leq i,j\leq n)\\
            \mathbb{E}\Big[|U_{ij}|^4\Big]&=\frac{2}{n(n+1)}\qquad (1\leq i,j\leq n)\\
            \mathbb{E}\Big[|U_{ij}|^2|U_{i'j}|^2\Big]=\mathbb{E}\Big[|U_{ij}|^2|U_{ij'}|^2\Big]
            &=\frac{1}{n(n+1)}\qquad (i\neq i',j\neq j')\\
            \mathbb{E}\Big[|U_{ij}|^2|U_{i'j'}|^2\Big]
            &=\frac{1}{n^2-1}\qquad (i\neq i',j\neq j').
        \end{align*}
    \end{lemma}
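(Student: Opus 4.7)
The plan is to derive this moment lemma from two ingredients: the unitary invariance of the Haar measure (which forces almost all dependence to collapse into a handful of orbit parameters) and one nontrivial input about the distribution of a single column of $U$. Everything else is algebra.

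First, I would recall that a column $u_{\cdot j}=(U_{1j},\ldots,U_{nj})$ of a Haar-distributed $U\in\U{n}$ is uniformly distributed on the complex unit sphere $S^{2n-1}\subset\mathbb{C}^n$; equivalently, by the Gaussian construction $U=G(G^*G)^{-1/2}$ for a complex Ginibre matrix $G$, the vector $(|U_{1j}|^2,\ldots,|U_{nj}|^2)$ is a symmetric Dirichlet $(1,\ldots,1)$ variable on the probability simplex. From the standard Dirichlet moments one reads off $\mathbb{E}[|U_{ij}|^2]=1/n$, $\mathbb{E}[|U_{ij}|^4]=2/(n(n+1))$, and $\mathbb{E}[|U_{ij}|^2|U_{i'j}|^2]=1/(n(n+1))$ for $i\neq i'$. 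This handles three of the four identities at once.

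Next, I would invoke the fact that the Haar measure on $\U{n}$ is invariant under transposition $U\mapsto U^\top$, which turns a same-column statement into a same-row statement. Applying this to the previous identity gives $\mathbb{E}[|U_{ij}|^2|U_{ij'}|^2]=1/(n(n+1))$ for $j\neq j'$.

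It remains to obtain the genuinely bivariate moment with $i\neq i'$ and $j\neq j'$. Here I would use the row-sum constraint $\sum_{j'}|U_{i'j'}|^2=1$: multiplying by $|U_{ij}|^2$ and taking expectations gives
\[
\mathbb{E}\!\left[|U_{ij}|^2|U_{i'j}|^2\right]+(n-1)\,\mathbb{E}\!\left[|U_{ij}|^2|U_{i'j'}|^2\right]=\mathbb{E}[|U_{ij}|^2]=\tfrac{1}{n}.
\]
Substituting the already known same-column value $1/(n(n+1))$ for the first term and solving yields $1/(n^2-1)$, as claimed.

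The only nontrivial step is the identification of the column distribution as Dirichlet; this is the main obstacle in the sense that it is the one place where a real probabilistic/representation-theoretic fact is needed (rather than bookkeeping). An alternative route would go through the Weingarten calculus directly, which computes $\mathbb{E}[U_{i_1j_1}U_{i_2j_2}\overline{U_{i_3j_3}}\,\overline{U_{i_4j_4}}]$ as a sum over pairs of permutations in $\mathfrak S_2$; this would give all four formulas uniformly but requires more setup, so I would prefer the elementary route above.
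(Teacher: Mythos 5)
Your proof is correct, but it is worth noting that the paper does not prove this lemma at all: it is imported verbatim as Proposition 4.2.3 of Hiai and Petz \cite{hiai2000semicircle}, and the authors only remark later that the same moments could be recovered via the (graphical) Weingarten calculus. Your argument is therefore a genuinely different, self-contained route, and a sound one: each column of a Haar unitary is uniform on the complex unit sphere (this follows most directly from left-invariance, $VUe_j \sim Ue_j$ for all $V \in \U{n}$, which is slightly cleaner than appealing to the polar decomposition $G(G^*G)^{-1/2}$, whose columns are not individually normalized Gaussians), so the squared moduli of a column form a flat Dirichlet vector, whose standard moments give the first three identities; transposition invariance of Haar measure transfers the same-column identity to same-row pairs; and the row-sum constraint $\sum_{j'}|U_{i'j'}|^2 = 1$, combined with the permutation symmetry that makes all cross terms with $i\neq i'$, $j\neq j'$ equal, pins down the last moment as $\tfrac{1}{n}-\tfrac{1}{n(n+1)} = \tfrac{1}{n+1}$ divided by $n-1$, i.e.\ $\tfrac{1}{n^2-1}$. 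What your approach buys is elementarity and transparency: only the sphere/Dirichlet identification plus linear bookkeeping from unitarity, with no need for Weingarten sums over pairs of permutations; what the citation (or Weingarten) route buys is uniformity and extendability to higher moments, which your normalization trick does not by itself provide.
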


    We leverage now this result to obtain the first moments of a random generalized unistochastic matrix.

    \begin{proposition}\label{prop:first-moments-B}
        For a Haar-distributed random unitary matrix $U\in \U{ds}$, consider the corresponding $\mu_{d,s}$-distributed bistochastic matrix
        $$B:=\phi_{d,s}(U) = \left(\frac{1}{s}||U_{ij}||_F^2\right)_{i,j \in [d]} \in \mathsf B_d.$$ For all $1 \leq i\neq i',j\neq j' \leq d$ and $n:=ds$ we have:
        \begin{align*}
            \mathbb{E}\Big[B_{ij}\Big]&=\frac{1}{d}\\
            \mathbb{E}\Big[B_{ij}^2\Big]&=\frac{d(s^2+1)-2}{d(n^2-1)} \\
            \mathbb{E}\Big[B_{ij}B_{i'j}\Big]=\mathbb{E}\Big[B_{ij}B_{ij'}\Big]&=
            \frac{ds^2-1}{d(n^2-1)}\\
            \mathbb{E}\Big[B_{ij}B_{i'j'}\Big]&=
            \frac{s^2}{n^2-1}.
        \end{align*}

    \end{proposition}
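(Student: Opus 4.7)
The plan is to expand each entry of $B$ as $B_{ij} = \frac{1}{s}\sum_{k,l \in [s]} |U_{ij}(k,l)|^2$ and then apply \cref{lem:moments-unitary} term by term, viewing $U$ as a Haar-random element of $\U{n}$ with $n = ds$. The crucial structural observation is that entries indexed by distinct block-rows $i \neq i'$ necessarily lie in distinct rows of $U$ (irrespective of the within-block indices $k, k'$), and likewise for columns when $j \neq j'$. This dictates which of the four moment formulas in the lemma applies to each summand of the form $|U_{ij}(k,l)|^2 |U_{i'j'}(k',l')|^2$.

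The first moment $\mathbb{E}[B_{ij}] = \tfrac{1}{s} \cdot s^2 \cdot \tfrac{1}{n} = \tfrac{1}{d}$ is then immediate. For $\mathbb{E}[B_{ij}^2]$, I would partition the $s^4$ index pairs $\bigl((k,l),(k',l')\bigr)$ into four classes according to whether $k = k'$ and whether $l = l'$. The counts are $s^2$ for the diagonal case (with moment $2/(n(n+1))$), $s^2(s-1)$ each for the two ``same row, different column'' and ``different row, same column'' cases (moment $1/(n(n+1))$), and $s^2(s-1)^2$ for the fully off-diagonal case (moment $1/(n^2-1)$); these counts sum to $s^4$ as a sanity check. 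Collecting the contributions over the common denominator $n(n^2-1)$, the numerator simplifies as $2s(n-1) + n(s-1)^2 = n(s^2+1) - 2s$, which upon substituting $n = ds$ gives the advertised value $[d(s^2+1) - 2]/[d(n^2-1)]$.

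For the mixed second moments, when $i \neq i'$ and $j = j'$ the two unitary entries always sit in different rows of $U$, so the only remaining dichotomy is whether $l = l'$: this gives $s^3$ pairs of moment $1/(n(n+1))$ (``different row, same column'') and $s^3(s-1)$ pairs of moment $1/(n^2-1)$ (fully off-diagonal), combining to $s(ns-1)/[n(n^2-1)] = (ds^2-1)/[d(n^2-1)]$. The computation for $i = i'$, $j \neq j'$ is fully symmetric. Finally, when $i \neq i'$ and $j \neq j'$, every one of the $s^4$ summands falls under the generic formula $1/(n^2-1)$, immediately yielding $s^2/(n^2-1)$.

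The only real obstacle is bookkeeping rather than anything conceptual: one needs to enumerate the four classes of index pairs correctly for the within-block-entry case, and then carry through the algebraic simplification over the common denominator $n(n^2-1) = ds(n^2-1)$ to recover the stated expressions.
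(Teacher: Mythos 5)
Your proposal is correct and follows essentially the same route as the paper's own proof: expand each $B_{ij}$ as $\frac{1}{s}\sum_{k,l}|U_{ij}(k,l)|^2$, classify the pairs of entries of $U$ by whether they share a row or a column of the big unitary, apply Lemma \ref{lem:moments-unitary} to each class, and simplify over the common denominator $n(n^2-1)$; your counts and algebra all check out. The only cosmetic difference is that you handle the mixed moment via the case $i\neq i'$, $j=j'$ while the paper writes out $i=i'$, $j\neq j'$, which is symmetric.
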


    \begin{proof}
        We show the different claims one by one, using Lemma \ref{lem:moments-unitary}.
        $$\mathbb{E}\Big[B_{ij}\Big]
        =\frac{1}{s}\mathbb{E}\Big[\Tr(U_{ij}U_{ij}^*)\Big]
        =\frac{1}{s}\mathbb{E}\Big[||U_{ij}||_F^2\Big]
        =\frac 1 s \sum_{k,l=1}^s\mathbb{E}\Big[|U_{ij}(k,l)|^2\Big]
        =\frac{1}{s}\cdot s^2\cdot \frac{1}{n}
        =\frac{1}{d}.$$

        \begin{align*}
            \mathbb{E}\Big[B_{ij}^2\Big]
            &=\frac{1}{s^2}\mathbb{E}\Big[\Tr(U_{ij}U_{ij}^*)^2\Big]
            =\frac{1}{s^2}\mathbb{E}\Big[\big(\sum_{k,l=1}^s|U_{ij}(k,l)|^2\big)\big(\sum_{p,q=1}^s|U_{ij}(p,q)|^2\big)\Big]\\
            &=\frac{1}{s^2}\mathbb{E}\Big[\big(\sum_{k,l=1}^s|U_{ij}(k,l)|^2\big)\big(|U_{ij}(k,l)|^2+\sum_{\substack{q=1\\q\neq l}}^s|U_{ij}(k,q)|^2+\sum_{\substack{p=1\\p\neq k}}^s|U_{ij}(p,l)|^2+\sum_{\substack{p,q=1\\p\neq k,\, q\neq l}}^s\!\!\!|U_{ij}(p,q)|^2\big)\Big]\\
            &=\frac{1}{s^2}\cdot s^2\cdot \Big[\frac{2}{n(n+1)}+2(s-1)\frac{1}{n(n+1)}+(s-1)^2\frac{1}{n^2-1}\Big]=\frac{d(s^2+1)-2}{d(n^2-1)}.
        \end{align*}

        \begin{align*}
            \mathbb{E}\Big[B_{ij}B_{ij'}\Big]
            &=\frac{1}{s^2}\mathbb{E}\Big[\big(\sum_{k,l=1}^s|U_{ij}(k,l)|^2\big)\big(\sum_{p,q=1}^s|U_{ij'}(p,q)|^2\big)\Big]\\
            &=\frac{1}{s^2}\mathbb{E}\Big[\big(\sum_{k,l=1}^s|U_{ij}(k,l)|^2\big) \big(\sum_{q=1}^s|U_{ij'}(k,q)|^2+\sum_{\substack{p,q=1\\p\neq k}}^s|U_{ij'}(p,q)|^2\big)\Big]\\
            &=\frac{1}{s^2}\cdot s^2\cdot \Big[s\frac{1}{n(n+1)}+s(s-1)\frac{1}{n^2-1}\Big].
        \end{align*}

        \begin{align*} \mathbb{E}\Big[B_{ij}B_{i'j'}\Big]
            =\frac{1}{s^2}\mathbb{E}\Big[\big(\sum_{k,l=1}^s|U_{ij}(k,l)|^2\big)\big(\sum_{p,q=1}^s|U_{i'j'}(p,q)|^2\big)\Big]
            =\frac{1}{s^2}\cdot s^4\cdot \frac{1}{n^2-1}=\frac{s^2}{n^2-1}.       \end{align*}
    \end{proof}

    \begin{remark}
        We could get the same results using the (graphical) Weingarten calculus \cite{Collins_2006,Collins_2010} used to compute general integrals over the unitary group with respect to the Haar measures.
    \end{remark}

    Let us now analyze the correlations between different matrix elements of $B$. Recall that  \emph{Pearson's correlation coefficient} of a pair of random variables $(X,Y)$ is defined as
    $$\rho(X,Y) := \frac{\operatorname{Cov}(X,Y)}{\sqrt{\operatorname{Var}(X)}\cdot \sqrt{\operatorname{Var}(Y)}}.$$

    \begin{corollary}
        For all $d \geq 2$, $s \geq 1$, $n:=ds$, and $ 1 \leq i\neq i', j\neq j' \leq d$, we have:
        \begin{align*}
            \operatorname{Var}(B_{ij})&=\frac{(d-1)^2}{d^2(n^2-1)}
            \\
            \operatorname{Cov}(B_{ij},B_{i'j})=\operatorname{Cov}(B_{ij},B_{ij'})&=-\frac{d-1}{d^2(n^2-1)}\\
            \rho(B_{ij},B_{i'j})=\rho(B_{ij},B_{ij'})&=-\frac{1}{d-1}
            \\
            \operatorname{Cov}(B_{ij},B_{i'j'})&=\frac{1}{d^2(n^2-1)}
            \\
            \rho(B_{ij},B_{i'j'})&=\frac{1}{(d-1)^2}.
        \end{align*}
    \end{corollary}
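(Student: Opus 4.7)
The plan is to derive all five identities by a direct application of the definitions of variance, covariance, and Pearson's correlation coefficient, feeding in the joint moments computed in Proposition \ref{prop:first-moments-B}. Nothing new needs to be integrated against the Haar measure, so the proof reduces to bookkeeping and algebraic simplification, with the key substitution being $n = ds$, i.e.\ $n^2 = d^2 s^2$, which makes the leading terms cancel neatly.

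First I would compute the variance via $\operatorname{Var}(B_{ij}) = \mathbb{E}[B_{ij}^2] - (\mathbb{E}[B_{ij}])^2 = \frac{d(s^2+1)-2}{d(n^2-1)} - \frac{1}{d^2}$; bringing to common denominator $d^2(n^2-1)$ and using $d^2 s^2 = n^2$, the numerator collapses to $d^2 - 2d + 1 = (d-1)^2$, yielding the stated formula. For the two covariance expressions I would apply the same cancellation: writing $\operatorname{Cov}(B_{ij},B_{i'j}) = \mathbb{E}[B_{ij}B_{i'j}] - 1/d^2$ and $\operatorname{Cov}(B_{ij},B_{i'j'}) = \mathbb{E}[B_{ij}B_{i'j'}] - 1/d^2$, the $d^2 s^2$ terms annihilate $n^2$ and the remaining constants give $-(d-1)$ and $+1$ respectively, divided by $d^2(n^2-1)$.

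Finally, the two correlation coefficients are ratios of the above: $\rho(B_{ij},B_{i'j}) = \operatorname{Cov}(B_{ij},B_{i'j})/\operatorname{Var}(B_{ij})$, where the common denominator $d^2(n^2-1)$ in numerator and denominator drops out, leaving $-(d-1)/(d-1)^2 = -1/(d-1)$; analogously, $\rho(B_{ij},B_{i'j'}) = 1/(d-1)^2$. These expressions are independent of $s$, which is a pleasant sanity check: the correlations depend only on the ambient dimension $d$ of the Birkhoff polytope, not on the generalization order. I do not anticipate any genuine obstacle; the only place where one must be attentive is ensuring the signs and the $(d-1)$ factors come out correctly after exploiting $n^2 - d^2s^2 = 0$ in each numerator.
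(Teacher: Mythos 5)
Your computation is exactly the "direct computation from Proposition \ref{prop:first-moments-B}" that the paper invokes (and leaves to the reader): subtract $1/d^2$ from the second moments, use $n^2=d^2s^2$ to simplify, and divide covariances by the common variance $\frac{(d-1)^2}{d^2(n^2-1)}$. All the algebra checks out, so the proposal is correct and takes the same route as the paper.
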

    \begin{proof}
        This follows by direct computation from Prop. \ref{prop:first-moments-B}; the details are left to the reader.
    \end{proof}

    \begin{remark}
        Note that the covariance (and the correlation coefficient) of elements of $B$ situated on the same row (or column) is negative; this anti-correlation is explained by the normalization conditions of bistochastic matrices. However, the correlation of elements not belonging to the same row and column is positive.

        Let us also note that while the covariance of the matrix elements of $B$ decreases (in absolute value) with the parameter $s$, the correlation coefficient is constant (at fixed matrix dimension $d$).
    \end{remark}

    The fact that the variance of $B_{ij}$ decreases with the parameter $s$ at fixed matrix dimension $d$ means that the distribution $\mu_{d,s}$ concentrates, as $s \to \infty$, around the van der Waerden matrix. The same phenomenon can be seen at the level of spectra, see Figure \ref{fig:random-spectra}: the non-trivial eigenvalues of the random matrices $B \sim \mu_{d,s}$ tend to concentrate around the origin as $s$ grows. We point the reader interested in spectral properties of bistochastic and unistochastic matrices to the papers \cite{zyczkowski2003random,cappellini2009random}. Finally, note the pair of complex eigenvalues outside the gray-bounded region in the middle top panel of Figure \ref{fig:random-spectra}; they are a signature of the fact that $\mathsf U_{3,2} \supsetneq \mathsf U_{3,1}$, see Example~\ref{ex:generalized-but-not-unistochastic}.

    \begin{figure}[htb]
        \centering
        \includegraphics[width=0.275\textwidth]{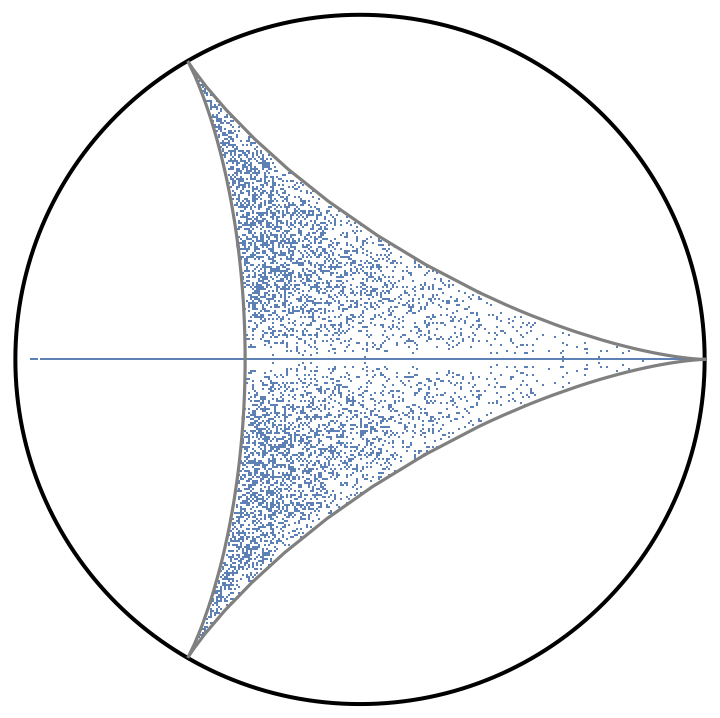} \qquad
        \includegraphics[width=0.275\textwidth]{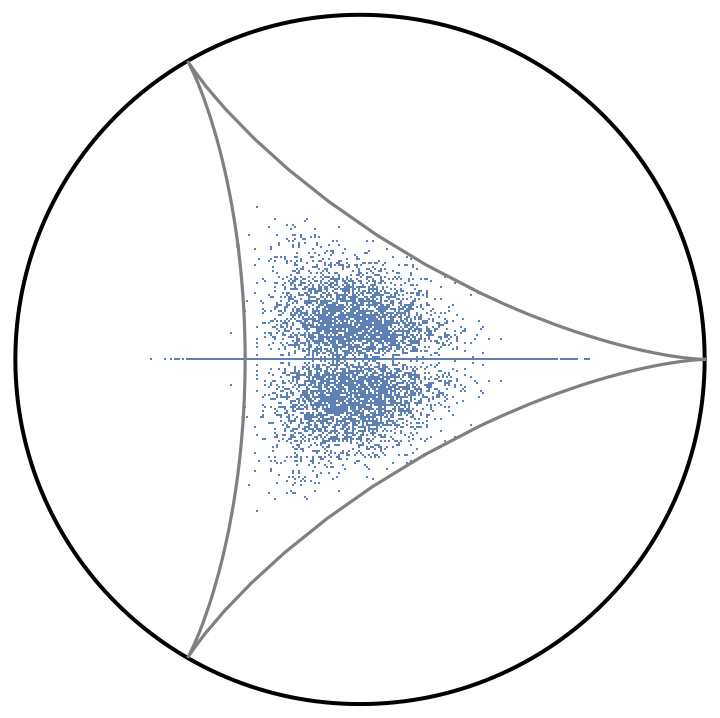} \qquad
        \includegraphics[width=0.275\textwidth]{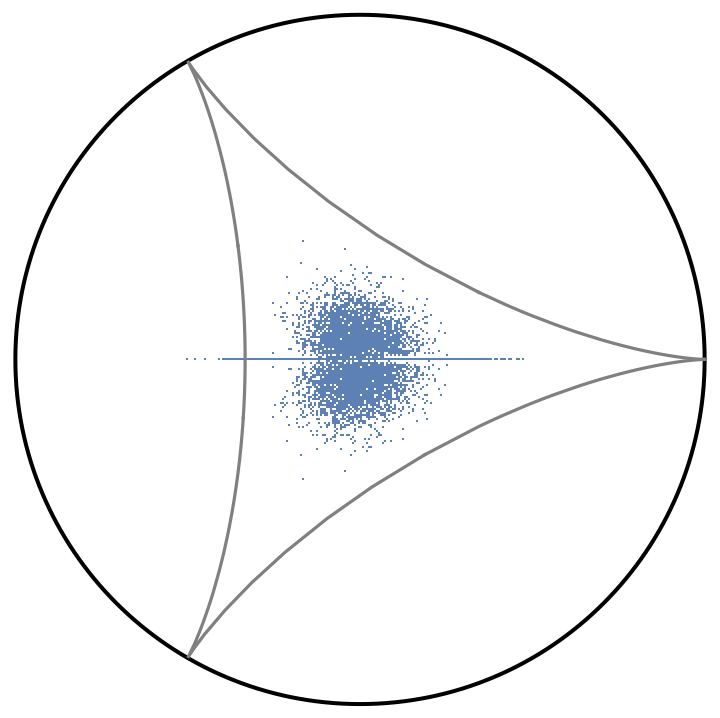}\\
        \includegraphics[width=0.275\textwidth]{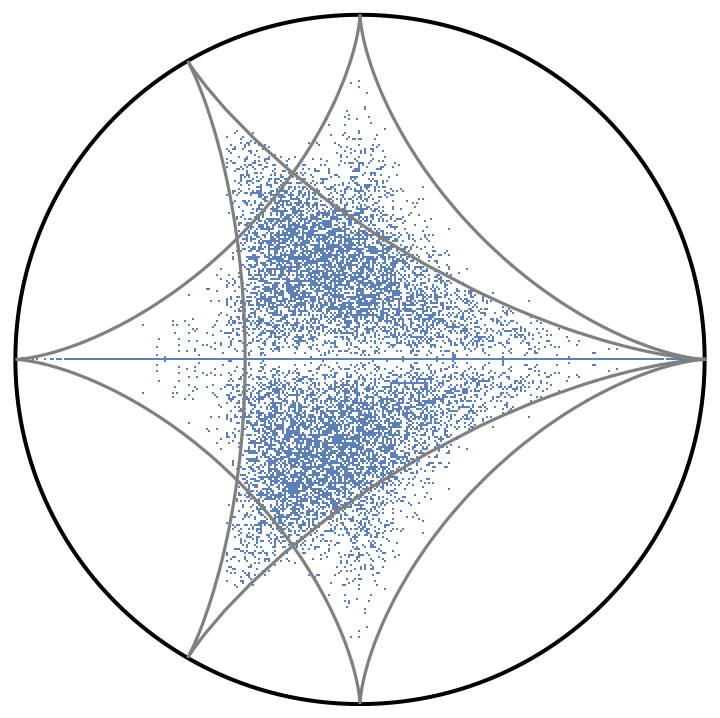} \qquad
        \includegraphics[width=0.275\textwidth]{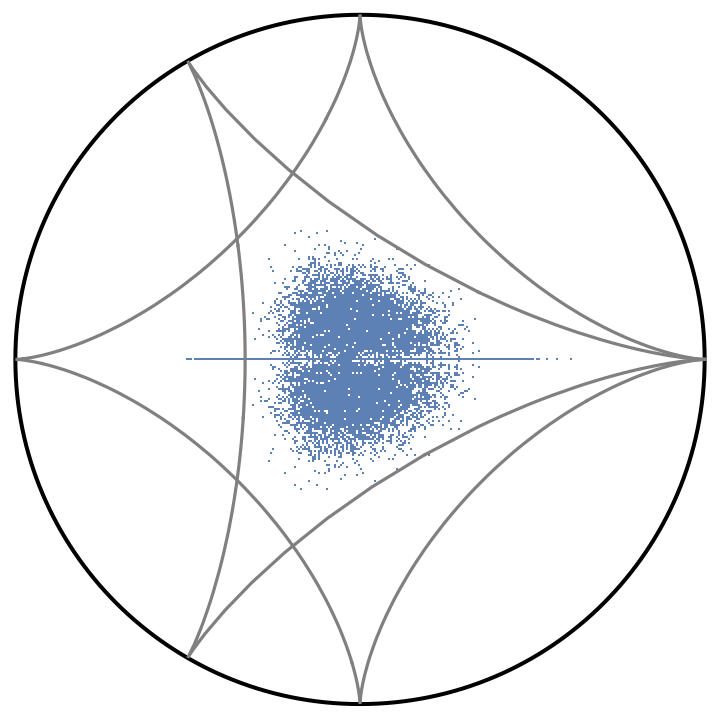} \qquad
        \includegraphics[width=0.275\textwidth]{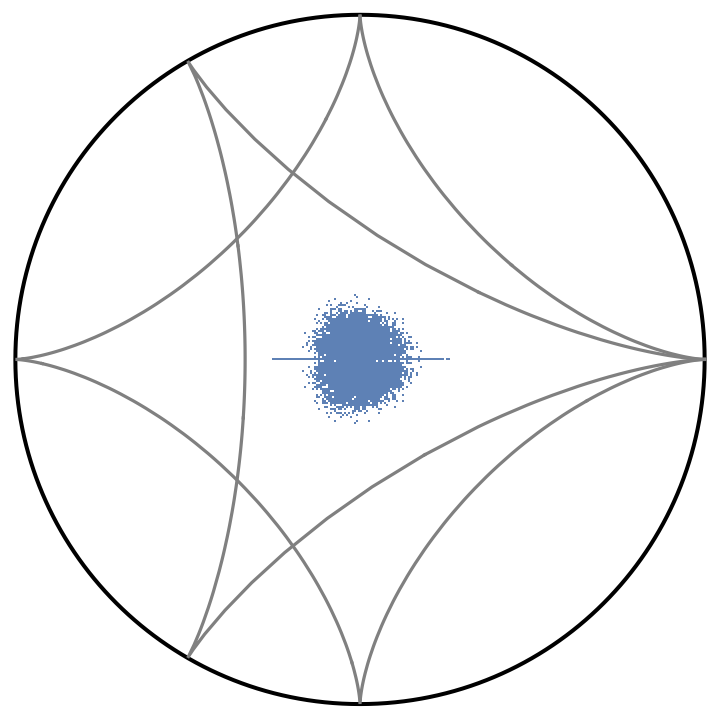}
        \caption{Spectra of random generalized bistochastic matrices. We plot the (complex) eigenvalues of 10\,000 samples from the measure $\mu_{d,s}$ introduced in Def.~\ref{def:mu-ds}. On the top row, we have $d=3$, and, respectively, $s=1,2,3$. On the bottom row, $d=4$ and $s=1,2,4$. We also plot in gray the hypocycloid curves which are conjectured to bound the spectra in the unistochastic case $s=1$, see \cite[Section 4.3]{zyczkowski2003random}.}
        \label{fig:random-spectra}
    \end{figure}

    \noindent\textbf{Acknowledgements.} We thank Karol {\.Z}yczkowski for bringing the paper \cite{shahbeigi2021log} to our attention. I.N.~was supported by the ANR projects \href{https://esquisses.math.cnrs.fr/}{ESQuisses}, grant number ANR-20-CE47-0014-01 and \href{https://www.math.univ-toulouse.fr/~gcebron/STARS.php}{STARS}, grant number ANR-20-CE40-0008, and by the PHC program \emph{Star} (Applications of random matrix theory and abstract harmonic analysis to quantum information theory).
    A.S.~was supported by the ANR project  \href{https://qtraj.math.cnrs.fr/}{Quantum Trajectories}, grant number ANR-20-CE40-0024-01.
    Z.O.~would like to extend my sincere gratitude to Professor Ion Nechita and Anna Szczepanek for their invaluable guidance and insightful discussions. Additionally, I also wish to express my appreciation to all professors at IMT for their exceptional guidance and support, which makes my M1 year in Toulouse a truly fulfilling and enriching experience.

    \bibliography{unistoch}
    \bibliographystyle{alpha}
    \bigskip
    \hrule
    \bigskip

\end{document}